\documentclass[12pt]{amsart}

\usepackage[utf8]{inputenc}
\usepackage[english]{babel}
\usepackage{amssymb,amsmath,amsthm,amsfonts,mathtools}
\usepackage{verbatim}
\usepackage{graphics}
\usepackage[margin=1in]{geometry}
\usepackage{amsthm}
\usepackage{amsmath}
\usepackage{amssymb}
\usepackage{relsize}
\usepackage{ stmaryrd }
\usepackage{todonotes}
\usepackage{hyperref}
\usepackage{cleveref}
\usepackage{enumitem}
\newtheorem{theorem}{Theorem}[section]
\newtheorem{lemma}[theorem]{Lemma}
\newtheorem{corollary}[theorem]{Corollary}
\newtheorem{proposition}[theorem]{Proposition}
\newtheorem{definition}[theorem]{Definition}
\usepackage{ wasysym }
\usepackage{color}

\usepackage{stackengine}
\stackMath

\newcommand{\A}{\mathbb{A}}
\newcommand{\Z}{\mathbb{Z}}

\newcommand{\F}{\mathbb{F}}
\newcommand{\K}{\mathbb{K}}

\newcommand{\f}{\textit{\textbf{f}}}

\DeclareMathOperator{\Span}{Span}

\newcommand{\Fq}{\F_q}

\title{Further Improvements to the Chevalley-Warning Theorems}
\author{David B. Leep}
\address{Department of Mathematics, University of Kentucky, Lexington, KY, 40506}
\email{leep@uky.edu}
\author{Rachel L. Petrik}
\address{Department of Mathematics, Rose-Hulman Institute of Technology, Terre Haute, IN, 47803}
\email{petrik@rose-hulman.edu}
\thanks{Acknowledgements here.}
\keywords{Chevalley-Warning Theorem, Finite Fields, Homogeneous Forms, Number of Zeros}
\subjclass[2010]{11E76, 11T06}

\begin{document}
\begin{abstract}
    We study lower bound estimates for the number of solutions of systems of equations over finite fields. Heath-Brown improved the lower bounds given by the classical \emph{Chevalley-Warning Theorems} by excluding systems of equations whose solutions form an affine space. We improve each of Heath-Brown's results and demonstrate sharpness in several cases.
\end{abstract}

\maketitle

\section{Introduction}\label{intro}
The goal of this paper is to study lower bounds on the number of solutions to systems of polynomial equations over finite fields. Given a general system of polynomials, a classical question has been to determine the existence of a solution. %For systems over finite fields, the key results on the existence of solutions are the \emph{Chevalley--Warning theorems} \cite{CW,Warning} (see \Cref{CW} below). Warning gave a lower bound in \cite{Warning} for the number of solutions. In \cite{Heath-Brown}, Heath-Brown  improved the lower bounds given in \cite{Warning}. In this paper, we improve Heath-Brown's results.
For systems over finite fields, the key results on the existence of solutions are the \emph{Chevalley--Warning theorems} \cite{CW, Warning} (see \Cref{CW} below). In particular, Warning provided a lower bound in \cite{Warning} for the number of solutions. Easy examples exist to show that Warning's result is best possible. Avoiding these easy cases, Heath-Brown \cite{Heath-Brown} (see \Cref{thm: HB}) improved the lower bound given in \cite{Warning}. In this paper, we improve each of Heath-Brown's lower bounds (see \Cref{thm: HB Improvement}) by separating out the various cases more clearly and computing the estimates more precisely. In addition, we simplify the proof of \Cref{thm: HB} (2) for $q \geq 4$ and extend it to $q=3$. In \Cref{Sharpness}, we provide examples showing many of our bounds are optimal.

Let $\Fq$ denote the finite field of order $q = p^k$, where $p$ is a prime and $k \in \Z_{>0}$. We write $\f = \{f_1, \dots, f_r\}$ to denote a system of polynomials in $\Fq[x_1, \dots, x_n]$. Let $d_i = \deg(f_i)$ for $1 \leq i \leq r$. We define the degree of $\f$ to be $d \coloneqq d_1+ \dots + d_r$. Let $Z(\f, \Fq^n)$ be the set of zeros of $\f$ over $\Fq$ and let $N(\f, \Fq^n) = |Z(\f, \Fq^n)|$.

\begin{theorem}\label{CW} Let $f_{1}, \dots, f_{r} \in \Fq[x_{1},\dots, x_{n}]$, $\deg(f_{i}) = d_{i}$, $1 \leq i \leq r$. Assume $n > d_1 + d_2 + \cdots + d_r$. Then the following statements hold.
\begin{enumerate}
    \item \emph{(Chevalley, \cite{CW})} If $Z(\f, \Fq^n)$ is nonempty, then $N(\f,\Fq^{n}) \geq 2$.
    \item \emph{(Warning, \cite{Warning})} $N(\f, \Fq^{n}) \equiv 0 \bmod{p}$.
    \item \emph{(Warning, \cite{Warning})} If $Z(\f, \Fq^n)$ is nonempty, then $N(\f,\Fq^{n}) \geq q^{n-d}$.
    \item \emph{(Ax, \cite{Ax})} $N(\f, \Fq^{n}) \equiv 0 \bmod{q}$.
\end{enumerate}
\end{theorem}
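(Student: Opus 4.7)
My plan is to treat parts (1)--(3) with the classical auxiliary-polynomial technique, and to reach (4) via additive characters. For (2), I would introduce
\[
F(x) = \prod_{i=1}^{r} \bigl(1 - f_i(x)^{q-1}\bigr) \in \Fq[x_1, \ldots, x_n],
\]
which by Fermat's little theorem satisfies $F(P) = 1$ when $P \in Z(\f, \Fq^n)$ and $F(P) = 0$ otherwise, so $N(\f, \Fq^n) \equiv \sum_{P \in \Fq^n} F(P) \pmod p$. Expanding $F$ into monomials $x_1^{a_1} \cdots x_n^{a_n}$ and using the identity $\sum_{t \in \Fq} t^k \equiv 0 \pmod p$ except when $k > 0$ and $(q-1) \mid k$, each monomial sum over $\Fq^n$ vanishes mod $p$ unless every $a_i$ is a positive multiple of $q-1$. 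That would force $a_1 + \cdots + a_n \geq n(q-1)$, contradicting $\deg F \leq (q-1) d < (q-1) n$. Hence (2) holds, and (1) follows immediately since a nonempty $Z(\f, \Fq^n)$ then has cardinality divisible by $p \geq 2$.

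For (3), the congruence from (2) must be strengthened into an actual cardinality lower bound. My approach is Warning's slicing argument, organized as an induction on $n - d$: the base case $n - d = 0$ is trivial, and for the inductive step I fix $P \in Z(\f, \Fq^n)$, translate so that $P = 0$, and examine pencils of affine lines $L = \Fq v$ through $0$. Each restriction $f_i|_L$ is a univariate polynomial of degree $\leq d_i$ vanishing at $0$, so either $L \subseteq Z(\f, \Fq^n)$ or the product $\prod_i f_i|_L$ is a nonzero univariate polynomial of degree $\leq d$, giving $|Z(\f, \Fq^n) \cap L| \leq d$. Counting incidences $(P', L)$ with $P' \in Z(\f, \Fq^n) \cap L$ and applying (2) to appropriate $(d+1)$-dimensional affine slices through $0$ forces enough lines to sit entirely in $Z(\f, \Fq^n)$ to produce at least $q^{n-d}$ distinct zeros.

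For (4), the elementary polynomial identity used above gives only mod $p$ information, so I would switch to additive characters. Fixing a nontrivial $\psi \colon \Fq \to \C^{\times}$, orthogonality yields
\[
q^r\, N(\f, \Fq^n) = q^n + \sum_{t \in \Fq^r \setminus \{0\}} \sum_{x \in \Fq^n} \psi\bigl(t_1 f_1(x) + \cdots + t_r f_r(x)\bigr),
\]
and Stickelberger's congruence on Gauss sums supplies a $p$-adic divisibility for each inner sum strong enough, when combined with $d < n$, to conclude $q \mid N(\f, \Fq^n)$. The main obstacles in this plan are the passages from mod $p$ divisibility to finer arithmetic: (3) is the most delicate when $q > p$, since the auxiliary polynomial by itself only yields $p \mid N(\f, \Fq^n)$, and closing the gap to $q^{n-d}$ requires genuine geometric or combinatorial input; (4) demands replacing polynomial identities altogether by $p$-adic / character-sum estimates.
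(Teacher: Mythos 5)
The paper does not prove Theorem~\ref{CW} at all: all four parts are quoted from Chevalley, Warning, and Ax with citations, so there is no in-paper argument to compare yours against, and I can only judge the proposal on its own terms. Parts (1) and (2) are fine: the auxiliary polynomial $F=\prod_i(1-f_i^{q-1})$, the vanishing of $\sum_{t\in\Fq}t^{k}$ unless $k$ is a positive multiple of $q-1$, and the degree count $\deg F\le (q-1)d<(q-1)n$ constitute the classical proof, and (1) follows from (2).

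Part (3) has a genuine gap, and the mechanism you propose cannot be repaired. You want to fix a zero, translate it to the origin, and argue that incidence counting ``forces enough lines to sit entirely in $Z(\f,\Fq^n)$'' to produce $q^{n-d}$ points. But the zero set need not contain a single affine line. Take $q=3$, $n=3$, $f=x_1^2+x_2^2+x_3^2-x_1$, so $d=2<n$. The zero set consists of exactly the six points $(0,0,0)$, $(1,0,0)$, $(2,1,0)$, $(2,2,0)$, $(2,0,1)$, $(2,0,2)$; three distinct points of $\F_3^3$ are collinear if and only if they sum to zero, and no triple from this list does, so the zero set contains no line whatsoever --- yet $|Z(f,\F_3^3)|=6\ge q^{n-d}=3$, as Warning's theorem asserts. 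Thus after translating any zero to the origin your dichotomy always lands in the case $|Z\cap L|\le d$, every line contributes nothing, and no count of incidences can close the argument. The missing idea is to work with $d$-dimensional affine subspaces rather than lines: Warning proves that parallel $d$-dimensional affine slices meet the zero set in numbers congruent modulo $p$ (the mod-$p$ ancestor of the paper's Theorem~\ref{thm: HB Theorem 1}), so that a single slice meeting $Z(\f,\Fq^n)$ in $\not\equiv 0\bmod p$ points forces all $q^{n-d}$ parallel slices to meet it (exactly the mechanism of Lemma~\ref{lem: HB Lemma 3}); the residual case, in which every $d$-dimensional slice meets the zero set in a multiple of $p$ points, requires a separate counting argument that your sketch does not supply.

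Part (4) correctly identifies Ax's strategy --- additive characters, orthogonality, and Stickelberger's theorem on the $p$-adic valuation of Gauss sums --- but the entire content of the proof is the divisibility bound for the inner character sums, which you assert rather than derive. As written, (4) is a pointer to Ax's paper, which is in effect what the paper itself does by citation; that is acceptable for quoting the result but is not a proof.
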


A polynomial $f$ in $n$ variables is defined over $\Fq$ if $f \in \Fq[x_1, \dots, x_n]$. A polynomial $f \in \Fq[x_1, \dots, x_n]$ is a \emph{(homogeneous) form of degree} $d$, for some $d \geq 0$, if $f \neq 0$ and each monomial in $f$ has degree $d$. We say that $(a_1, \dots, a_n) \in \Fq^n$ is a zero of $\f$ if $f_i(a_1, \dots, a_n)=0$, $1 \leq i \leq r$. We say $\f$ is a \emph{homogeneous system} if each $f_i$ is a homogeneous form. If $\f$ is a homogeneous system with $\deg(f_i) \geq 1$ for each $i$, then $f_i(0, \dots , 0) = 0$ for each $i$. A zero $(a_1, \dots , a_n) \in \Fq^n$ of a homogeneous system $\f$ is a \emph{nontrivial zero} if $(a_1, \dots, a_n) \ne  (0, \dots , 0)$.

If $f_1, \dots, f_r$ are homogeneous forms and $n>d$, then the system has a nontrivial zero by Theorem \ref{CW} (1) and, therefore, there are at least $q^{n-d} \geq 2$ zeros by Theorem \ref{CW} (3). If $P$ is the number of projective zeros defined over $\Fq$, then $N(\f, \Fq^n)=P(q-1)+1$. This implies 
\begin{equation}\label{eq: N cong q-1}
    N(\f, \Fq^n) \equiv 1 \bmod q-1. 
\end{equation}

Further improvements require additional hypotheses, as examples exist (e.g. norm forms) that show the bound in Theorem \ref{CW} (3) is best possible. In these examples the set of zeros forms a subspace of dimension $n-d$ in $\Fq^n$. In fact, Heath-Brown showed (see Theorem \ref{thm: HB} below) that all examples where Theorem \ref{CW} (3) is sharp have the property that the set of zeros forms an affine space of $\Fq^n$.

\begin{definition}\label{def: Affine Space}
An \textit{affine space} is a coset of a subspace. We say that two affine spaces are \textit{parallel} if they are cosets of the same subspace. The \textit{dimension} of an affine space is the dimension of the corresponding subspace. We let $\A^t(\Fq)$ denote any affine space of dimension $t$ over $\Fq$.
\end{definition}

Since we work entirely in $\Fq^n$, it follows that $\A^n(\Fq) = \Fq^n$. When $Z(\f, \Fq^n)$ is not an affine space, Heath-Brown (\cite{Heath-Brown}) provided the following improvement to Theorem \ref{CW}. If $d=1$, then $Z(\f, \Fq^n)$ is always an affine space, which is excluded. Thus, without loss of generality, we can always assume $d \geq 2$.

\begin{theorem}[Heath--Brown, \cite{Heath-Brown}, Theorem 2]\label{thm: HB}
Suppose that $n>d$, $Z(\f, \Fq^{n})$ is non-empty, and $Z(\f, \Fq^n)$ is not an affine space of $\Fq^{n}$. Then the following statements hold. 
\begin{enumerate}
\item For any $q$, we have $N(\f,\Fq^{n}) > q^{n-d}$.
\item If $q \geq 4$, we have $N(\f,\Fq^{n}) \geq 2q^{n-d}$.
\item For any $q$, we have $N(\f,\Fq^{n}) \geq \dfrac{q^{n+1-d}}{(n+2-d)}$ provided that the polynomials $f_{1}, \dots, f_{r}$ are homogeneous.
\end{enumerate}
\end{theorem}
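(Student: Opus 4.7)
My plan is to prove all three parts by a common slicing argument combined with strong induction on $n+d$. Fix the $x_n$-direction and, for each $c \in \Fq$, let $\f^{(c)} := (f_1(x_1, \dots, x_{n-1}, c), \dots, f_r(x_1, \dots, x_{n-1}, c))$ with $d^{(c)} := \sum_i \deg f_i^{(c)} \leq d$, and set $N_c := N(\f^{(c)}, \Fq^{n-1})$, so that $N(\f, \Fq^n) = \sum_{c \in \Fq} N_c$. By \Cref{CW}(3), each nonempty slice satisfies $N_c \geq q^{n-1-d^{(c)}} \geq q^{n-1-d}$, and the structure of $Z(\f, \Fq^n)$ will be pinned down by analyzing when these slice bounds are tight.

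I would prove \textbf{(1)} via its contrapositive: equality $N(\f, \Fq^n) = q^{n-d}$ forces $Z(\f, \Fq^n)$ to be an affine space. Summing the lower bounds with the equality hypothesis forces every slice to be nonempty and to satisfy $N_c = q^{n-1-d}$ with $d^{(c)} = d$. The inductive hypothesis, applied with the smaller parameter $n-1$, identifies each $Z(\f^{(c)}, \Fq^{n-1})$ with an affine space of dimension $n-1-d$. An alignment argument, carried out by repeating the slicing in a second independent coordinate direction and cross-comparing the underlying subspaces of the slices produced, then shows that these $q$ parallel slices are cosets of a single common subspace, so their union is an affine space of dimension $n-d$ in $\Fq^n$, contradicting the hypothesis.

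For \textbf{(2)}, I would leverage (1): since $Z$ is not an affine space, the alignment step guarantees that at least one slice is itself not an affine space in $\Fq^{n-1}$, and the inductive version of (2) on that slice yields $N_c \geq 2 q^{n-1-d}$. Combined with the Warning bound on the remaining slices this gives $N(\f, \Fq^n) \geq (q+1) q^{n-1-d}$, and the hypothesis $q \geq 4$ should allow iteration over multiple slicing directions (or a pigeonhole on which slices fail to be affine) to boost this to $2 q^{n-d}$. For \textbf{(3)}, I would use that a homogeneous $Z$ is a cone over a projective variety $V \subseteq \mathbb{P}^{n-1}(\Fq)$, with $N(\f, \Fq^n) = 1 + (q-1)|V|$ and $|V| \geq 2$; slicing by parallel affine hyperplanes and applying the inductive hypothesis to each slice produces a lower bound on $N$, and the denominator $n+2-d$ should emerge from averaging over the at most $n+1-d$ independent coordinate directions in which a nondegenerate cone can fail to spread, together with a single extra contribution from the apex of the cone.

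The principal obstacles are the alignment step in (1)---establishing that parallel affine-space slices of the same dimension must share a common underlying subspace, which I expect to settle by applying the slicing argument in two independent directions and forcing compatibility between the two subspaces produced---and identifying the precise averaging or dimension-counting procedure in (3) that yields the sharp denominator $n+2-d$ rather than a weaker constant. Both issues are essentially combinatorial and linear-algebraic in nature, and I anticipate that a careful treatment of the edge cases where $n$ is close to $d$ (so that \Cref{CW} barely applies to the slices) will be the most technically demanding part of the argument.
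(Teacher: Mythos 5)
There are genuine gaps here, and the central one is in your treatment of part (1). From $N(\f,\Fq^n)=\sum_{c}N_c$ and the per-slice bound $N_c\ge q^{n-1-d}$ for nonempty slices, the equality $N(\f,\Fq^n)=q^{n-d}$ does \emph{not} force all $q$ slices to be nonempty with exactly $q^{n-1-d}$ zeros each: the zero set could be concentrated in fewer hyperplanes (for instance, an affine space of dimension $n-d$ lying inside a single slice $x_n=c$ contributes all $q^{n-d}$ zeros to one slice and none to the others), and your inequality only yields $k\,q^{n-1-d}\le q^{n-d}$ for the number $k$ of nonempty slices, which is no contradiction. The same defect undermines part (2): it is false that $Z(\f,\Fq^n)$ failing to be an affine space forces some slice to fail to be one --- every slice can be an affine space of dimension $n-1-d$ while the slices are mutually non-parallel, so the whole is not affine; your claimed bound $N_c\ge 2q^{n-1-d}$ on a ``bad'' slice then has nothing to apply to. There is also a base-case problem: when $n=d+1$ the slices have $n-1=d^{(c)}$ and Warning's theorem (\Cref{CW}(3)) no longer applies to them, so the induction cannot start. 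Finally, part (3) is not an argument at all as written; the denominator $n+2-d$ will not fall out of an ``averaging over coordinate directions'' heuristic.

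For comparison, the route the paper takes (following Heath--Brown) avoids slicing entirely. The key inputs are the congruence $N(\f,L_1)\equiv N(\f,L_2)\bmod q$ for parallel affine spaces of dimension at least $d$ (\Cref{thm: HB Theorem 1}), a counting lemma for a maximal affine space $L$ on which $N(\f,\cdot)$ is constant together with a minimal one-dimension-larger $L'$ (\Cref{lem: HB Lemma 1}), and combinatorial facts about a set containing $n+2-d$ points of $Z(\f,\Fq^n)$ in general position and its intersections with lines (\Cref{lem: HB Lemma 2}, guaranteed to apply by \Cref{lem: general position of zeros}). The congruence mod $q$ is exactly what substitutes for your missing ``alignment'' step: it propagates a small positive count on one $d$-dimensional affine space to every parallel translate, which is how the factor $q^{n-d}$ is recovered without ever needing the zeros to distribute evenly across slices. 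If you want to salvage a slicing approach, you would need some analogue of that congruence; without it the distribution of zeros among the slices is simply not controlled.
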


The main result of this paper, Theorem \ref{thm: HB Improvement}, is to show that each part of Theorem \ref{thm: HB} can be improved as follows. 

\begin{theorem}\label{thm: HB Improvement}
Suppose that $n>d$, $Z(\f, \Fq^{n})$ is non-empty, and $Z(\f, \Fq^n)$ is not an affine space of $\Fq^{n}$. Then the following statements hold.
\begin{enumerate}
\item For $q=2$, $N(\f,\F_2^{n}) \geq 2^{n-d} + 2$.
\item For $q \geq 3$, $N(\f,\Fq^{n}) \geq 2q^{n-d}$.
\item For $q \geq 2$, $N(\f,\Fq^{n}) > \dfrac{q^{n+1-d}-1}{q-1} \cdot \dfrac{q}{n+2-d}$ provided that the polynomials $f_{1}, \dots, f_{r}$ are homogeneous.
\item For $q \geq 3$, $N(\f, \Fq^n) \geq 2q^{n-d} + (q-2)q$ provided that the polynomials $f_1, \dots, f_r$ are homogeneous.
\end{enumerate}
Moreover, the bounds in (1), (2), and (4) are sharp under these hypotheses.
\end{theorem}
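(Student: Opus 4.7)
Parts~(1) and~(4) follow from combining existing lower bounds with divisibility constraints. For~(1), Heath-Brown's strict inequality in Theorem~\ref{thm: HB}(1) gives $N > 2^{n-d}$, while Warning (Theorem~\ref{CW}(2)) with $p=2$ forces $N$ even; since $2^{n-d}$ is itself even (as $n > d$), the smallest admissible value is $2^{n-d}+2$. For~(4), granting Part~(2), one has $N \geq 2q^{n-d}$, and two congruences pin $N$ down further: homogeneity gives $N = (q-1)P + 1 \equiv 1 \pmod{q-1}$, while Ax (Theorem~\ref{CW}(4)) gives $q \mid N$. Since $\gcd(q,q-1) = 1$, the Chinese Remainder Theorem yields $N \equiv q \pmod{q(q-1)}$. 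A short computation using $q \equiv 1 \pmod{q-1}$ shows $2q^{n-d} \equiv 2q \pmod{q(q-1)}$, so the smallest $N$ in the residue class $q \pmod{q(q-1)}$ that is at least $2q^{n-d}$ is exactly $2q^{n-d} + q(q-2)$.

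\textbf{Part~(2).} Separate into $q \geq 4$ and $q = 3$. For $q \geq 4$, the plan is to streamline Heath-Brown's argument by a hyperplane-slicing analysis: produce a linear form $\ell$ so that every slice $Z(\f) \cap \{\ell = c\}$ is non-affine, apply Theorem~\ref{thm: HB}(1) (and when useful, the appropriate inductive version of our own bound) to each of the $q$ slices, and sum. For $q = 3$, Ax's congruence combined with Theorem~\ref{thm: HB}(1) immediately gives $N \geq 3^{n-d} + 3$, which matches $2\cdot 3^{n-d}$ when $n-d=1$. For $n - d \geq 2$ the plan is an induction on $n - d$ using the same slicing idea: after reducing to the case where $Z(\f)$ affinely spans $\Fq^n$ (otherwise restrict to the affine hull and recurse), choose $\ell$ with $\ell(Z(\f)) = \Fq$, which is possible because $n \geq d+1 \geq 3 = q$ lets us separate three affinely independent zeros by a linear form. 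Each of the three slices has parameters $(n-1, d)$; if all are non-affine, the inductive hypothesis gives $N \geq 3 \cdot 2 \cdot 3^{n-1-d} = 2\cdot 3^{n-d}$. The main obstacle is the residual case where, for every hyperplane direction, at least one slice happens to be an affine space: a rigid structural analysis is needed to show this configuration forces $Z(\f)$ itself to be affine, contradicting the hypothesis.

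\textbf{Part~(3).} For homogeneous $\f$ with $\deg f_i \geq 1$, the origin is a zero and $\Fq^{\times}$ acts on $Z(\f)$; writing $N = (q-1)P+1$, the target bound rewrites as a lower bound on the projective zero count $P$ involving $|\mathbb{P}^{n-d}(\Fq)| = \frac{q^{n+1-d}-1}{q-1}$. The plan is to refine the projective averaging argument underlying Heath-Brown's Theorem~\ref{thm: HB}(3) by replacing its use of Warning's (non-strict) inequality on each relevant projective linear slice with the strict inequality from Theorem~\ref{thm: HB}(1), and by counting zeros on projective subspaces of the correct dimension one class at a time rather than in aggregate. Propagating the strict improvement through the double count and cleaning up the arithmetic yields the sharper constant, replacing the factor $q^{n+1-d}$ in Heath-Brown's bound by $q \cdot |\mathbb{P}^{n-d}(\Fq)|$.
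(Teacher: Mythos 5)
Your treatments of (1) and (4) are correct and essentially coincide with the paper's: the paper itself remarks that (1) follows from Theorem \ref{CW} (2) together with Theorem \ref{thm: HB} (1) because $2^{n-d}+2$ is the least even integer exceeding $2^{n-d}$, and its proof of (4) is exactly your congruence argument, phrased as $N = 2q^{n-d}+aq$ with $a \equiv -1 \bmod{q-1}$ rather than via CRT. The genuine gaps are in (2) and (3), which carry the real content of the theorem.

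For (2), the ``residual case'' you defer --- showing that if every hyperplane direction produces at least one affine slice then $Z(\f,\Fq^n)$ is itself affine --- is not a loose end but the entire difficulty, and you give no argument for it (nor is the implication obvious; a single affine slice of a non-affine zero set is easy to arrange). The paper does not argue by slicing and induction on $n-d$ at all: it takes $t+1 \ge n+2-d$ zeros in general position (Lemma \ref{lem: general position of zeros}), applies Lemma \ref{lem: HB Lemma 2} (2) to either fill an affine space or produce a line $L_0$ with $2 \le N(\f,L_0) \le q-1$, and then runs Lemma \ref{lem: HB Lemma 1} through the cases $k \le d-2$, $k=d$, $k=d-1$. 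The hard subcase ($k=d-1$, $N(\f,L')-N(\f,L)=1$, $q=3$) reduces to a $(d+1)$-dimensional space $L''$ with exactly six zeros $P_0,\dots,P_5$ and requires an explicit configuration-by-configuration construction, according to $\dim\Span\{P_1,\dots,P_5\}\in\{3,4,5\}$, of a hyperplane $\widehat{L}$ with $N(\f,\widehat{L}\cap L'')\equiv 2 \bmod 3$, after which Lemma \ref{lem: HB Lemma 3} gives $2q^{n-d}$. Nothing of that kind appears in your sketch. For (3), your paragraph is a plan rather than a proof, and it mislocates the mechanism: the argument is not a projective averaging refinement but an application of Lemma \ref{lem: HB Lemma 2} (4) with $m=\lfloor qv/(v+1)\rfloor$, $v=n+1-d$, together with Lemma \ref{lem: HB Lemma 1} in the complementary case. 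Extracting the stated constant requires the inequality $\lfloor qv/(v+1)\rfloor^{v} > q^{v}/(v+1)$ (Lemma \ref{lem: HB Inequality}), which genuinely fails for $(q,v)=(5,2),(7,2)$ and must be patched there using part (2) and the congruence $N\equiv 1 \bmod{q-1}$; the regime $q \le 2v$ needs yet another separate argument (one Heath-Brown omitted). None of these obstructions is acknowledged, so (3) cannot be regarded as proved.
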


In Section \ref{Sharpness}, we state more precisely our results on the sharpness of these bounds. When $n >d$, the bound given in Theorem \ref{thm: HB Improvement} (3) is better than the result given in Theorem \ref{thm: HB} (3) because
\[
\dfrac{q^{n+1-d}-1}{q-1}\cdot \dfrac{q}{n+2-d} > q^{n-d} \cdot \dfrac{q}{n+2-d} = \dfrac{q^{n+1-d}}{n+2-d}.
\]

\section{Assumed Results from Heath-Brown's Paper}\label{Improvements to HB}
 
We need the following results, which are proven in \cite{Heath-Brown}, to prove Theorem \ref{thm: HB Improvement}.

\begin{theorem}[Heath--Brown, \cite{Heath-Brown}, Theorem 1]\label{thm: HB Theorem 1}
For any two parallel affine spaces $L_{1},L_{2} \subseteq \A^n(\Fq)$ of dimension at least d, we have
\[
N(\f, L_{1}) \equiv N(\f, L_{2}) \bmod{q}.
\]
\end{theorem}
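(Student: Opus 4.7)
My plan is to pull back both affine spaces to a common $(t+1)$-dimensional affine space and apply Ax's theorem (Theorem~\ref{CW}(4)) to the resulting system.

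Write $L_i = v_i + V$ with $t := \dim V \geq d$. The case $L_1 = L_2$ is trivial, so I assume $\delta := v_2 - v_1 \notin V$, let $U := v_1 + V + \langle \delta \rangle$ be the $(t+1)$-dimensional affine span of $L_1 \cup L_2$, and fix a basis $e_1, \dots, e_t$ of $V$. The affine bijection $\phi(\lambda, y) := v_1 + \lambda \delta + \sum_j y_j e_j$ identifies $\Fq^{t+1}$ with $U$ and sends the slice $\{\lambda = \lambda_0\}$ to $L_{\lambda_0} := v_1 + \lambda_0 \delta + V$, so $L_0 = L_1$ and $L_1 = L_2$. The pullbacks $h_k(\lambda, y) := f_k(\phi(\lambda, y))$ are polynomials of total degree $\leq d_k$, and their degrees sum to $\leq d$.

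When $t > d$, the conclusion is immediate: for each fixed $\lambda_0$, the system $\mathbf{h}(\lambda_0, \cdot)$ in $t$ variables has total degree $\leq d < t$, so Ax's theorem gives $N(\f, L_{\lambda_0}) \equiv 0 \pmod q$ for every $\lambda_0$, and in particular $N(\f, L_1) \equiv N(\f, L_2) \pmod q$.

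The main obstacle is the boundary case $t = d$, where restriction to a single slice has exactly $d$ variables and total degree $\leq d$, just failing Ax's strict hypothesis. To handle this, I would apply Ax's theorem instead to $\mathbf{h}$ on all of $U \cong \Fq^{d+1}$ (where the inequality $d+1 > d$ does hold), yielding the aggregate congruence $\sum_{\lambda \in \Fq} N(\f, L_\lambda) \equiv 0 \pmod q$; the remaining task is to upgrade this single congruence to pointwise constancy of $N(\f, L_\lambda)$ modulo $q$. Modulo $p$ this upgrade is clean: expanding $N(\f, L_\lambda) \equiv \sum_{y \in \Fq^d} \prod_i (1 - h_i(\lambda, y)^{q-1}) \pmod p$ and bookkeeping degrees, any monomial $\lambda^a y_1^{m_1} \cdots y_d^{m_d}$ that survives the $y$-summation must have each $m_j$ equal to a positive multiple of $q-1$, while the total degree of $\prod_i h_i^{q-1}$ is at most $(q-1)d$; these two bounds together force each $m_j = q-1$ and $a = 0$, so the surviving coefficient is a constant in $\Fp$. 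The mod-$q$ refinement, which is the hardest step, would be obtained by carrying out the same identity in the Witt vectors $W(\Fq)$ using Teichm\"uller representatives $\omega(a)$ in place of $a$, so that $\mathbf{1}[a=0] = 1 - \omega(a)^{q-1}$ holds exactly in $W(\Fq)$; the degree-tracking argument then proceeds modulo $q$ in the style of Ax's original proof of Theorem~\ref{CW}(4).
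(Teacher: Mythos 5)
The paper itself does not prove this statement: it is listed in Section~2 among the ``Assumed Results from Heath-Brown's Paper'' and is cited from \cite{Heath-Brown} without proof, so there is no in-paper argument to measure you against. Judged on its own terms, your reduction is sound as far as it goes. The affine parametrization of the $(t+1)$-dimensional span $U$, the fact that degrees do not increase under an affine substitution, and the disposal of the case $t>d$ by applying Theorem~\ref{CW}~(4) to each slice separately are all correct. Your degree-tracking argument in the boundary case $t=d$ is also complete \emph{modulo $p$}: a monomial $\lambda^a y_1^{m_1}\cdots y_d^{m_d}$ survives the $y$-summation only if every $m_j$ is a positive multiple of $q-1$, and comparison with the total degree bound $(q-1)d$ forces $m_j=q-1$ and $a=0$, so $N(\f,L_\lambda)\bmod p$ is constant in $\lambda$. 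In particular your argument fully proves the theorem when $q$ is prime.

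The gap is the upgrade from $p$ to $q=p^k$ with $k\ge 2$, which is exactly where the content of the theorem (like that of Ax's theorem itself) lies. Your proposed fix --- replace $a$ by its Teichm\"uller representative $\omega(a)$ so that $\mathbf{1}[a=0]=1-\omega(a)^{q-1}$ holds exactly in $W(\Fq)$, and then ``run the same degree-tracking argument modulo $q$'' --- does not go through as stated, because $\omega$ is multiplicative but not additive: $\omega(h_i(\lambda,y))$ is \emph{not} a polynomial of degree at most $d_i$ in $\omega(\lambda),\omega(y_1),\dots,\omega(y_d)$, so the monomial expansion on which your degree count rests is simply unavailable over $W(\Fq)$. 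Making this work requires the genuinely harder machinery behind Ax's theorem: either Stickelberger-type valuation bounds for the Gauss sums arising from the character-sum expression for $N(\f,L_\lambda)$, or Wan-style congruences expressing $\omega(a_1+\cdots+a_s)$ modulo $p^k$ as a $p^{k-1}$-st power of a sum of $p^{k-1}$-st roots of Teichm\"uller monomials --- and with either route the degree bookkeeping changes substantially (fractional exponents with denominator $p^{k-1}$ enter, and the divisibility analysis for the surviving monomials must be redone). This is the step that Heath-Brown's actual proof carries out, and it cannot be dispatched by asserting that ``the same identity'' holds in $W(\Fq)$; as written, your argument establishes the congruence only modulo $p$.
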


\begin{lemma}[Heath--Brown, \cite{Heath-Brown}, Lemma 1] \label{lem: HB Lemma 1}
Let $L_{0} \subseteq \A^n(\Fq)$ be an affine space. Choose an affine space $L$ of maximal dimension $k$ such that $L \supseteq L_{0}$ and $N(\f, L) = N(\f, L_{0})$. Suppose $L' \supset L$ is an affine space of dimension $k+1$ such that $N(\f, L')$ is minimal.
Then $N(\f, L') > N(\f, L)$ and
\begin{equation*}\label{eq: HB Lemma 1}
N(\f, \A^n(\Fq)) \geq N(\f, L) + \dfrac{q^{n-k}-1}{q-1}(N(\f, L')-N(\f, L)).
\end{equation*}
\end{lemma}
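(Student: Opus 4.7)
The plan is to prove the two conclusions separately. First, for the strict inequality $N(\f, L') > N(\f, L)$, I would argue by contradiction using the maximality of $L$. Since $L \subseteq L'$, we have $N(\f, L') \geq N(\f, L) = N(\f, L_{0})$. If equality held, then $L'$ itself would be an affine space of dimension $k+1$ containing $L_0$ with $N(\f, L') = N(\f, L_0)$, directly contradicting the choice of $L$ as having maximal dimension with that property.

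For the estimate on $N(\f, \A^n(\Fq))$, the approach is a double-counting argument applied to the collection $\mathcal{F}$ of all affine spaces of dimension $k+1$ that contain $L$. Writing $L = v + W$ with $W$ a $k$-dimensional subspace, every $L'' \in \mathcal{F}$ has the form $v + W''$ where $W \subsetneq W'' \subseteq \Fq^n$ and $\dim W'' = k+1$. Such $W''$ correspond bijectively to the one-dimensional subspaces of the quotient $\Fq^n / W$, which has dimension $n-k$, so $|\mathcal{F}| = (q^{n-k}-1)/(q-1)$.

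The next step is the covering observation: every point of $L$ lies in each member of $\mathcal{F}$, while every point of $\A^n(\Fq) \setminus L$ lies in exactly one member of $\mathcal{F}$, namely the unique affine space spanned by $L$ together with that point. Summing $N(\f, L'')$ over $L'' \in \mathcal{F}$ therefore counts each zero in $L$ a total of $|\mathcal{F}|$ times and each zero outside $L$ once, giving
\[
\sum_{L'' \in \mathcal{F}} N(\f, L'') \;=\; N(\f, \A^n(\Fq)) + (|\mathcal{F}| - 1)\, N(\f, L).
\]
Solving for $N(\f, \A^n(\Fq))$ and applying the minimality inequality $N(\f, L'') \geq N(\f, L')$ for every $L'' \in \mathcal{F}$ yields
\[
N(\f, \A^n(\Fq)) \;\geq\; N(\f, L) + |\mathcal{F}|\bigl(N(\f, L') - N(\f, L)\bigr),
\]
which is the claimed bound after substituting the value of $|\mathcal{F}|$.

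The main step to get right is the covering/counting identity, both the count $|\mathcal{F}| = (q^{n-k}-1)/(q-1)$ obtained through the quotient $\Fq^n / W$ and the uniqueness of the $(k+1)$-dimensional affine span of $L$ and an external point. Once that is set up, the rest is straightforward rearrangement, and the strict inequality falls out of the maximality hypothesis with essentially no calculation.
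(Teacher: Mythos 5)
Your proof is correct, and it is the standard double-counting argument over the pencil of $(k+1)$-dimensional affine spaces containing $L$ — the same argument Heath-Brown gives; note that this paper only states the lemma as an assumed result from \cite{Heath-Brown} and does not reprove it. Both halves check out: the strict inequality follows exactly as you say from the maximality of $k$, and the counting identity (each point of $L$ covered $|\mathcal{F}|$ times, each point outside $L$ exactly once, with $|\mathcal{F}| = \frac{q^{n-k}-1}{q-1}$ via the one-dimensional subspaces of $\Fq^n/W$) rearranges to the claimed bound once you apply the minimality of $N(\f,L')$.
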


For the next lemma, we require the following definition.

\begin{definition} Let $\K$ be a field. A set of points in $\K^t$ is in \textit{general position} if no $n$ of them lie in an $(n-2)$-dimensional affine space for $2 \leq n \leq t+1$.
\end{definition}

\begin{lemma}[Heath--Brown, \cite{Heath-Brown}, Lemma 2] \label{lem: HB Lemma 2}
Let $S \subseteq \A^t(\Fq)$ be a set containing $t+1$ points in general position. Then the following statements hold.
\begin{enumerate}
\item If $q=2$, and if there is no 2-dimensional affine space $L \subseteq \A^t(\Fq)$ meeting $S$ in exactly 3 points, then $S= \A^t(\Fq)$.
\item If $q \geq 3$, and if $l \subseteq S$ for every line $l$ meeting $S$ in at least two points, then $S= \A^t(\Fq)$.
\item If $q \geq 4$, and if $|S\cap l|$ $\geq q-1$ for every line $l$ meeting $S$ in at least two points, then $\A^t(\Fq)\setminus S$ is contained in a hyperplane.
\item If $m \geq 2$ is an integer, and if $|S\cap l|$ $\geq m+1$ for every line $l$ meeting $S$ in at least two points, then $|S|$ $\geq \dfrac{m^{t+1}-1}{m-1}$.
\end{enumerate}
\end{lemma}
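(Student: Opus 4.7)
The plan is to prove all four parts by induction on the dimension $t$, using a uniform setup. Let $p_0, \ldots, p_t \in S$ be the given points in general position, and let $H$ denote the affine hyperplane of $\A^t(\Fq)$ spanned by $p_1, \ldots, p_t$. The general position hypothesis for the full $(t+1)$-tuple implies that $p_1, \ldots, p_t$ are in general position inside $H \cong \A^{t-1}(\Fq)$, and the line hypothesis in each part restricts verbatim to $S \cap H$ (since a line in $H$ is a line in $\A^t(\Fq)$). Thus the inductive hypothesis applies to $S \cap H$. The base case $t = 1$ is a direct verification on the single line $\A^1(\Fq) = \Fq$ in each of the four parts.

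For part (1) with $q = 2$, induction yields $H \subseteq S$, leaving the parallel hyperplane $H' \ni p_0$ to handle. For any $y \in H' \setminus \{p_0\}$, picking a direction $u$ transverse to $H$ produces a $2$-dimensional affine space $\{p_0, y, p_0+u, y+u\}$ whose last two points lie in $H \subseteq S$; since also $p_0 \in S$, the hypothesis forbidding any $2$-plane from meeting $S$ in exactly $3$ points forces $y \in S$. For part (2) with $q \geq 3$, induction again gives $H \subseteq S$. Given $y \in \A^t(\Fq) \setminus H$, if the line through $p_0$ and $y$ meets $H$ then that line has two points in $S$ ($p_0$ and the intersection with $H$) and is contained in $S$ by hypothesis, so $y \in S$. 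Otherwise $y$ lies in the unique hyperplane $H_1$ through $p_0$ parallel to $H$, in which case I first apply the preceding argument to any third hyperplane $H_2$ parallel to $H$ (which exists because $q \geq 3$) to deduce $H_2 \subseteq S$, and then a line from $y$ to any point of $H_2$ crosses $H$, again forcing $y \in S$ by the same reasoning.

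For part (4), which is a pure counting bound, the inductive hypothesis gives $|S \cap H| \geq (m^t - 1)/(m-1)$. For each $s \in S \cap H$, the line $l_s$ through $p_0$ and $s$ satisfies $|l_s \cap S| \geq m+1$ and meets $H$ exactly at $s$. The lines $\{l_s\}_{s \in S \cap H}$ are pairwise distinct and share only the point $p_0$, so the points of $S$ on these lines outside $\{p_0\} \cup (H \cap S)$ contribute at least $(m-1)\,|S \cap H|$ to $|S \setminus H|$. Adding the $|S \cap H|$ points in $H$ and the point $p_0$ gives $|S| \geq 1 + m\,|S \cap H| \geq 1 + m(m^t-1)/(m-1) = (m^{t+1}-1)/(m-1)$, as claimed.

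The main obstacle is part (3), which is the most delicate. The hypothesis can be rephrased as saying every line $l$ in $\A^t(\Fq)$ satisfies $|l \cap B| \in \{0,1,q-1,q\}$ where $B := \A^t(\Fq) \setminus S$; in particular every line through two points of $B$ contains at least $q-1$ points of $B$, a line-closure property symmetric to that enjoyed by $S$. I would argue by contradiction, assuming $B$ is not contained in any hyperplane, so $B$ spans $\A^t(\Fq)$ affinely. Applying the inductive hypothesis to each of the $t+1$ hyperplanes $H_i$ spanned by size-$t$ subsets of the $p_j$'s gives $B \cap H_i \subseteq L_i$ for some hyperplane $L_i \subset H_i$; the goal is to consolidate these constraints into a single hyperplane $L \subset \A^t(\Fq)$ containing $B$, using the dual fact that $B$ itself has the line-closure property. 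The hypothesis $q \geq 4$ is essential because $q - 1 \geq 3$ is required to upgrade affinely independent points of $B$ into a full general-position configuration via line-closure, and because the combinatorial margins in the counting break down at $q = 3$. Coordinating the symmetric information on $S$ and $B$ while tracking exceptional parallel directions is the main technical difficulty.
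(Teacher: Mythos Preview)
The paper does not actually prove this lemma; it quotes it verbatim from Heath-Brown \cite{Heath-Brown} and explicitly says so (``We need the following results, which are proven in \cite{Heath-Brown}''). Moreover, the paper stresses that its own arguments do \emph{not} use part~(3), ``the most difficult part of Lemma~\ref{lem: HB Lemma 2}''. So there is no in-paper proof to compare against; I can only assess your proposal on its own merits.

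Your arguments for parts (1), (2), and (4) are correct. The inductive setup (restricting to the hyperplane $H$ spanned by $p_1,\dots,p_t$) is clean, the base cases $t=1$ are immediate, and the inductive steps you give all go through. In (1), since $q=2$ there is a unique nonzero transverse direction $u$, so $p_0+u,\,y+u$ automatically land in $H$; the $2$-plane you build is genuine because $y-p_0$ and $u$ are independent. In (2), your two-step bootstrap (first fill an auxiliary parallel hyperplane $H_2$ using lines through $p_0$, then reach $H_1$ using lines through $H_2$) is exactly what is needed, and every line you use is transversal to the parallel family, hence meets $H$ in a point of $S$. In (4), the lines $l_s$ through $p_0$ share only $p_0$, each meets $H$ only at $s$, and the count $1+m\,|S\cap H|\ge (m^{t+1}-1)/(m-1)$ is correct.

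Part (3), however, is not proved. You correctly identify the reformulation in terms of $B=\A^t(\Fq)\setminus S$ and the symmetric line-closure property, and you outline a plausible inductive scheme, but the last paragraph is a description of difficulties rather than an argument: you do not show how to glue the hyperplane-by-hyperplane constraints $B\cap H_i\subseteq L_i$ into a single global hyperplane containing $B$, nor do you handle the exceptional directions you allude to. As written this is a genuine gap. Given that the present paper never invokes part~(3), this gap does not affect anything downstream here; but if you intend to supply a self-contained proof of the full lemma, part~(3) still needs to be completed (or you may simply cite \cite{Heath-Brown} for it, as the paper does).
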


Our proof of Theorem \ref{thm: HB Improvement} does not depend on Lemma \ref{lem: HB Lemma 2} (3), the most difficult part of Lemma \ref{lem: HB Lemma 2}, whereas the proof provided by Heath-Brown requires this statement.

Heath-Brown did not explicitly state the following lemmas, however, the proofs of these lemmas appear within the proof of his main result in \cite{Heath-Brown}. We include the proofs for the convenience of the reader.

\begin{lemma}[Heath--Brown, \cite{Heath-Brown}, page 431] \label{lem: HB Lemma 3} Let $L$ be an affine space of dimension $d$. If $n \geq d$ and $N(\f, L) = v$ where $1 \leq v \leq (q-1)$, 
then $N(\f, L^*) \ge v$ for every affine space $L^*$ of dimension $d$ parallel to $L$. 
As a result, $N(\f, \Fq^n) \geq vq^{n-d}$.
\end{lemma}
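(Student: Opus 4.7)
The plan is to deduce everything from Theorem \ref{thm: HB Theorem 1}, applied with the dimension threshold exactly at $d$. Let $L^* \subseteq \Fq^n$ be any affine space of dimension $d$ parallel to $L$. Since both $L$ and $L^*$ have dimension $d \geq d$, Theorem \ref{thm: HB Theorem 1} gives
\[
N(\f, L^*) \equiv N(\f, L) = v \pmod{q}.
\]
Because $1 \leq v \leq q-1$, the residue $v$ is nonzero modulo $q$, so $N(\f, L^*)$ lies in the same nonzero residue class. In particular the smallest possible nonnegative value of $N(\f, L^*)$ compatible with this congruence is $v$ itself, giving $N(\f, L^*) \geq v$.

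For the second assertion, I would partition $\Fq^n$ into parallel copies of $L$. Concretely, if $V$ is the $d$-dimensional linear subspace of $\Fq^n$ underlying $L$, choose any complementary subspace $W$ of dimension $n-d$; then the cosets $L + w$ for $w \in W$ are $q^{n-d}$ disjoint affine spaces of dimension $d$, all parallel to $L$, whose union is $\Fq^n$. By the first part, each contributes at least $v$ zeros of $\f$, so summing yields
\[
N(\f, \Fq^n) \;=\; \sum_{w \in W} N(\f, L+w) \;\geq\; v \cdot q^{n-d}.
\]

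There is essentially no obstacle here beyond correctly invoking Theorem \ref{thm: HB Theorem 1}: the hypothesis $\dim L = d$ is exactly the threshold under which the congruence modulo $q$ holds on parallel affine spaces, and the hypothesis $v \leq q - 1$ is precisely what upgrades the congruence $N(\f, L^*) \equiv v \pmod{q}$ into the inequality $N(\f, L^*) \geq v$. The only minor care is to observe that $\Fq^n$ genuinely decomposes into $q^{n-d}$ parallel translates of $L$, which uses $n \geq d$.
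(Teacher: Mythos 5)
Your proof is correct and follows essentially the same route as the paper's: apply Theorem \ref{thm: HB Theorem 1} to get the congruence $N(\f, L^*) \equiv v \bmod q$, use $1 \le v \le q-1$ to conclude $N(\f, L^*) \ge v$, and sum over the $q^{n-d}$ parallel translates of $L$ covering $\Fq^n$. The only difference is that you spell out the coset decomposition explicitly, which the paper leaves implicit.
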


\begin{proof}
By \Cref{thm: HB Theorem 1}, $N(\f, L^*) \equiv N(\f, L) = v \bmod q$.
Since $1 \leq v \leq (q-1)$, we must have $N(\f, L^*) \ge v$.
Since there are $q^{n-d}$ affine spaces $L^*$ of dimension $d$ parallel to $L$, 
we have $N(\f, \Fq^n) \geq vq^{n-d}$.
\end{proof}

\begin{lemma}[Heath--Brown, \cite{Heath-Brown}, page 432]\label{lem: if dim=d}
Let $L \subseteq \Fq^n$ be an affine space. Suppose $2 \leq N(\f, L) \leq q-1$ and $\deg(\f)=d$. 
\begin{itemize}
    \item[(1)] Then $\dim(L) \leq d$.
    \item[(2)] If $\f$ is a homogeneous system, then $\dim(L) \leq d-1$.
\end{itemize}
\end{lemma}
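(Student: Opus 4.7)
For part (1), the plan is to reduce directly to Chevalley--Warning on $L$ itself. Pick any $v_0 \in L$ and a basis $e_1, \ldots, e_m$ for the linear subspace of translations of $L$, where $m = \dim L$, and set $g_j(\mathbf t) \coloneqq f_j\bigl(v_0 + \sum_i t_i e_i\bigr)$. Each $g_j$ has degree at most $d_j$, so $\sum_j \deg g_j \leq d$, and the substitution gives a bijection between $Z(\mathbf g, \Fq^m)$ and $Z(\f, L)$. If $m > d$, then Theorem~\ref{CW}(3) applies (since $N(\mathbf g, \Fq^m) = N(\f, L) \geq 2 > 0$) and yields $N(\f, L) \geq q^{m-d} \geq q$, contradicting $N(\f, L) \leq q-1$. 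Thus $\dim L \leq d$.

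For part (2), assume $\f$ is homogeneous and, for contradiction, $\dim L = d$. Write $L = v_0 + V$ with $V$ the unique $d$-dimensional linear subspace parallel to $L$. If $L$ contains the origin, then $L = V$ is itself linear, and by homogeneity the nontrivial zeros in $L$ decompose into $\Fq^{\times}$-orbits of size $q-1$. Hence $N(\f, L) \equiv 1 \pmod{q-1}$, which is incompatible with $N(\f, L) \in [2, q-1]$ (no integer in that interval is congruent to $1$ modulo $q-1$).

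Otherwise $0 \notin L$, and I form the linear span $W = \Fq v_0 + V$, a subspace of dimension $d+1$. Scaling by $\lambda \in \Fq^{\times}$ is a zero-preserving bijection of $\Fq^n$ by homogeneity and carries $L$ onto $\lambda L = \lambda v_0 + V$, so $N(\f, \lambda L) = v \coloneqq N(\f, L)$ for every $\lambda \in \Fq^{\times}$. Partitioning $W$ into $V$ and the $q-1$ cosets $\lambda L$ gives
\[
N(\f, W) = N(\f, V) + (q-1)v.
\]
Since $\dim W = d+1 > d$, Theorem~\ref{CW}(3) gives $N(\f, W) \geq q$, Theorem~\ref{CW}(4) gives $N(\f, W) \equiv 0 \pmod q$, and \eqref{eq: N cong q-1} applied inside $W$ gives $N(\f, W) \equiv 1 \pmod{q-1}$. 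The minimal compatible value $N(\f, W) = q$ is excluded because it would force $N(\f, V) = q - (q-1)v \leq 2 - q < 1$, contradicting $0 \in Z(\f, V)$. Combining the remaining admissible values of $N(\f, W)$ with Lemma~\ref{lem: HB Lemma 3} (which gives $N(\f, V) \geq v$ on the parallel subspace $V$) and with the homogeneity congruence $N(\f, V) \equiv 1 \pmod{q-1}$ is designed to pin down $N(\f, V)$ tightly enough to force a contradiction with $v \in [2, q-1]$.

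The main obstacle is this last step of Case B: the congruences $N(\f, W) \equiv 0 \pmod q$, $N(\f, W) \equiv 1 \pmod{q-1}$, together with $N(\f, W) \geq q$, admit the ladder $N(\f, W) \in \{q, q^2, q(2q-1), \ldots\}$, so ruling out the larger values cannot be done by congruences alone. The expected remedy is to combine the $\Fq^{\times}$-orbit decomposition of all cosets of $V$ in $\Fq^n$ with the Heath-Brown mod-$q$ congruence of Theorem~\ref{thm: HB Theorem 1}, which constrains $N$ simultaneously on every coset parallel to $L$, and to exploit Lemma~\ref{lem: HB Lemma 3} to force the $N$-values on these cosets into a range that is ultimately inconsistent with $v \in [2, q-1]$.
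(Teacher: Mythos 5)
Your part (1) and Case A of part (2) are correct. Part (1) is essentially the paper's reduction (the paper restricts to $L$ and invokes Ax's divisibility, Theorem \ref{CW} (4), to get $q \mid N(\f,L)$, where you use Warning's bound, Theorem \ref{CW} (3); either contradiction with $2 \le N(\f,L) \le q-1$ works), and your Case A is identical to the paper's. The genuine gap is the one you flag yourself: Case B of part (2) is never closed. Your decomposition $N(\f,W) = N(\f,V) + (q-1)v$ is correct, but the constraints you assemble are all simultaneously satisfiable with $2 \le v \le q-1$: setting $N(\f,V) = 1 + (q+1-v)(q-1)$ and $N(\f,W) = N(\f,V)+(q-1)v$ satisfies $N(\f,W) \equiv 0 \bmod q$, $N(\f,W) \equiv N(\f,V) \equiv 1 \bmod (q-1)$, $N(\f,V) \equiv v \bmod q$ (Theorem \ref{thm: HB Theorem 1}), and $N(\f,V) \ge v$ (Lemma \ref{lem: HB Lemma 3}). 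So no further congruence-chasing along the lines you sketch can finish this case, and the proposal does not prove part (2).

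You should know, however, that the obstruction is real: part (2) is false when $0 \notin L$, so the gap cannot be filled. Take $q \ge 3$, $\f = \{x_1x_2 + x_3^2\}$ (so $d = 2$) and $L : x_3 = 1$ in $\Fq^3$. Then $N(\f,L) = q-1$ (the solutions of $ab=-1$), which satisfies $2 \le N(\f,L) \le q-1$, yet $\dim L = 2 = d$. Here the direction subspace $V : x_3 = 0$ carries $2q-1$ zeros, not $1$, and $N(\f,L') = (2q-1) + (q-1)(q-1) = q^2$. This pinpoints the error in the paper's own proof of Case B: the assertion that distinct zeros of $Z(\f,L)$ correspond bijectively to distinct projective zeros of $Z(\f,L')$, hence $N(\f,L') = 1+(q-1)N(\f,L)$, silently assumes that $V$ has no nontrivial zeros; lines through the origin lying in $V$ give projective zeros of $L'$ that never meet $L$, and the correct identity is exactly your $N(\f,L') = N(\f,V)+(q-1)N(\f,L)$. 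So your blind attempt in fact uncovers a mistake in the paper: Lemma \ref{lem: if dim=d} (2) as stated holds only in your Case A ($0 \in L$), and its application in Case IIIB of the proof of Theorem \ref{thm: HB Improvement} (3), where the conclusion $\dim(L) \le d-1$ is needed, requires repair.
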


\begin{proof}
\noindent \textit{(1)} If $\dim(L) >d$, then $q \mid N(\f, L)$ by Theorem \ref{CW} (4) and Theorem \ref{thm: HB Theorem 1}, which is a contradiction.

\noindent \textit{(2)} By (1), assume $\dim(L) = d$. First suppose $\boldsymbol{0} \in L$ and thus, $L$ is a subspace. Since $\f$ is a homogeneous system, we have $q-1 \mid N(\f,L)-1$ by Equation (\ref{eq: N cong q-1}). 
Thus, either $N(\f, L) = 1$ or $N(\f, L) \ge q$, which is a contradiction.
%Since $1 \leq N(\f, L)-1 \leq q-2,$ this is a contradiction.

Now suppose $\boldsymbol{0} \notin L$. Let $L'$ be the $(d+1)$-dimensional subspace containing $L$ and $\boldsymbol{0}$. Since $\f$ is a homogeneous system, distinct zeros of $Z(\f,L)$ correspond bijectively to distinct projective zeros of $Z(\f,L')$. Therefore, $N(\f,L')=1+(q-1)N(\f,L)$ by Equation (\ref{eq: N cong q-1}). Theorem \ref{CW} (4) implies that $q \mid N(\f,L')$. 
Thus, $N(\f,L) \equiv 1 \bmod q$, and so either $N(\f, L) = 1$ or $N(\f, L) \ge q + 1$, which is a contradiction.
%Thus, $N(\f,L) \equiv 1 \bmod q,$ which is a contradiction because $2 \leq N(\f,L) \leq q-1$.
\end{proof}

\section{Proof of Theorem \ref{thm: HB Improvement}}\label{Improvements to HB}

The following inequality is stated without proof in a slightly weaker form (for $q \geq 2v+3$) in \cite{Heath-Brown}. Recall that $\lfloor x \rfloor$ denotes the greatest integer less than or equal to $x$.

\begin{lemma} \label{lem: HB Inequality}
Suppose $v \in \Z$. Let $v \geq 2$ and $q \geq 2v+1$, then $\Big \lfloor \dfrac{qv}{v+1} \Big \rfloor^v > \dfrac{q^v}{v+1}$ with the exception of $v=2$, $q=5, 7$.
\end{lemma}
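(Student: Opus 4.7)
The plan is to recast the desired inequality in the equivalent form $(v+1)^{1/v}\lfloor qv/(v+1)\rfloor > q$ and then to control the loss introduced by the floor. Let $m = \lfloor qv/(v+1)\rfloor$ and $\varepsilon = qv/(v+1) - m$; writing $q = (v+1)a + r$ with $0 \leq r \leq v$ gives $\varepsilon \leq v/(v+1)$. Substituting produces the equivalent form $q\alpha > (v+1)^{1/v}\varepsilon$, where $\alpha := v(v+1)^{1/v-1} - 1$.

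The first step is to verify that $\alpha > 0$ for every $v \geq 2$, which is equivalent to $v^v > (v+1)^{v-1}$: this is immediate for $v = 2$ ($4 > 3$) and follows for $v \geq 3$ from $(1+1/v)^{v-1} < e < 3 \leq v$. Using the worst case $\varepsilon \leq v/(v+1)$, the target reduces to $q\alpha > \alpha + 1$, i.e.\ $(q-1)\alpha > 1$, and under the hypothesis $q \geq 2v+1$ it then suffices to show $2v\alpha > 1$. Unfolding $\alpha$ and raising to the $v$-th power, this is equivalent to
\[
    \Bigl(1 + \tfrac{3v+1}{2v^2}\Bigr)^{v} < v+1.
\]

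For $v \geq 5$ the elementary bound $\ln(1+x) \leq x$ gives $\bigl(1 + (3v+1)/(2v^2)\bigr)^v \leq e^{3/2 + 1/(2v)} \leq e^{5/3} < 6 \leq v+1$. The cases $v = 3$ and $v = 4$ reduce to the explicit checks $(14/9)^3 = 2744/729 < 4$ and $(45/32)^4 < 5$, each a one-line fractional computation. For $v = 2$ one computes $1/\alpha = 2\sqrt{3} + 3 \in (6,7)$, so the sufficient condition $(q-1)\alpha > 1$ begins to hold only at $q \geq 8$; the two residual values $q = 5$ and $q = 7$ are then handled directly, giving $3^2 = 9 > 25/3$ for $q = 5$ and $4^2 = 16 < 49/3$ for $q = 7$, which exhibits the exceptional behaviour at the stated values.

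The main obstacle is precisely this borderline behaviour at small $v$: the critical quantity $2v\alpha$ only barely exceeds $1$ at $v = 3$ (approximately $1.14$) and fails outright at $v = 2$, so a short finite case analysis is unavoidable. The key organizing idea is that isolating the constant $\alpha$ together with the worst-case fractional-part bound $\varepsilon \leq v/(v+1)$ collapses the full statement into a single one-parameter log/exp inequality, which is uniform for $v$ large and tractable by a handful of explicit computations for $v$ small.
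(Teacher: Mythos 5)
Your proof is correct and follows essentially the same route as the paper's: after absorbing the floor via the worst-case fractional part $\varepsilon \le \tfrac{v}{v+1}$, your sufficient condition $2v\alpha>1$ unwinds to exactly the paper's inequality $\bigl(1+\tfrac{3v+1}{2v^2}\bigr)^v < v+1$, which is $\bigl(1+\tfrac{1}{q-1}\bigr)^v\bigl(1+\tfrac{1}{v}\bigr)^v < v+1$ evaluated at $q=2v+1$, and both arguments then use the bound $e^x>(1+x/v)^v$ for large $v$ together with explicit checks for small $v$ and the same $q\ge 8$ threshold at $v=2$. One small remark: your direct computation $3^2=9>25/3$ shows the inequality actually \emph{holds} at $(q,v)=(5,2)$, so that pair is not genuinely exceptional (only $(7,2)$ fails), which is consistent with the lemma as stated but slightly at odds with your phrase ``exhibits the exceptional behaviour at the stated values.''
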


\begin{proof}
First we demonstrate that it is sufficient to show 
\[
v+1 > \Big (1 + \dfrac{1}{q-1}\Big)^v \Big(1 + \dfrac{1}{v} \Big)^v. 
\]

Suppose
\[
v+1 > \Big (1 + \dfrac{1}{q-1}\Big)^v \Big(1 + \dfrac{1}{v} \Big)^v = \Big( \dfrac{q}{q-1}\Big)^v\Big( \dfrac{v+1}{v}\Big)^v. 
\]

Taking the $v^{\text{th}}$ root of both sides yields

\[
(v+1)^{\frac{1}{v}} > \Big(\dfrac{q}{q-1}\Big) \Big(\dfrac{v+1}{v}\Big).
\]

Rearranging these terms through multiplication, we find
\[
\dfrac{q}{(v+1)^{\frac{1}{v}}} < \dfrac{(q-1)v}{v+1} = \dfrac{qv}{v+1} - \dfrac{v}{v+1} \leq \Big \lfloor \dfrac{qv}{v+1} \Big \rfloor,
\]
which gives the result of the lemma after raising both sides to the $v^{\text{th}}$ power.

Assume $q \geq 2v+1$. Note that since $e^y>1+y$ for $y>0$, it follows that 
\begin{equation}\label{eq: e^x}
    e^x>\Big(1+\frac{x}{v} \Big)^v,
\end{equation}
for all $x>0$, $v>0$. Then
\[
1 + \dfrac{1}{q-1} \leq  1 + \dfrac{1}{2v} =1+\dfrac{\frac{1}{2}}{v}
\]
\[
\Big (1 + \dfrac{1}{q-1}\Big)^v \leq \Big(1 + \dfrac{\frac{1}{2}}{v}\Big)^v < e^{\frac{1}{2}}.
\]

Applying (\ref{eq: e^x}) 

\begin{align*}
\Big (1 + \dfrac{1}{q-1}\Big)^v \Big(1 + \dfrac{1}{v} \Big)^v < e^{\frac{1}{2}}e = e^{\frac{3}{2}} < 5 \leq v+1 \text{ for } v \geq 4.
\end{align*}

Now let $v=3$. Then 
\[
\Big (1 + \dfrac{1}{q-1}\Big)^v \Big(1 + \dfrac{1}{v} \Big)^v < e^{\frac{1}{2}}\Big(\dfrac{4}{3}\Big)^3 = e^{\frac{1}{2}}\dfrac{64}{27} < 3.91 < 4 = v+1.
\]

Now let $v=2$. Notice that
\begin{align*}
\Big(\dfrac{q}{q-1} \Big)^v \Big(1 + \dfrac{1}{v} \Big)^v &= \dfrac{9}{4}\Big(\dfrac{q}{q-1} \Big)^2. \\
\end{align*}

We want the values of $q$ satisfying the inequality
$\dfrac{9}{4}\Big(\dfrac{q}{q-1} \Big)^2 < 3.$ This inequality is satisfied if and only if $\Big(\dfrac{q}{q-1} \Big)^2 < \dfrac{4}{3}$, which is true if and only if $q \geq 8$.

\end{proof}

\begin{lemma}\label{lem: general position of zeros}
Assume $n>d$ and $Z(\f, \Fq^n)$ is nonempty. If $Z(\f, \Fq^n)$ is not an affine space of $\Fq^n$, then $Z(\f,\Fq^n)$ contains at least $n+2-d$ points in general position.
\end{lemma}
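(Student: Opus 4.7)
The plan is to argue by contrapositive: assuming $Z \coloneqq Z(\f,\Fq^n)$ contains no set of $n+2-d$ points in general position, I will deduce that $Z$ is an affine space, contradicting the hypothesis. A preliminary observation is that for any collection of $m \le n+1$ points in $\Fq^n$, being in general position is equivalent to being affinely independent. Indeed, taking $k=m$ in the definition forces the $m$ points not to lie in any $(m-2)$-dimensional affine space, so their affine span must have full dimension $m-1$; conversely, any $k$ points drawn from an affinely independent set are themselves affinely independent and span a $(k-1)$-dimensional affine space, automatically avoiding every $(k-2)$-dimensional one. Since $n+2-d \le n+1$, the hypothesis translates to: the maximum size $s$ of an affinely independent subset of $Z$ satisfies $s \le n+1-d$.

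Next I would fix such a maximal $S \subseteq Z$ and let $L$ be its affine span, so $\dim L = s-1 \le n-d$. The main combinatorial step is to show $Z \subseteq L$. For any $P \in Z \setminus S$, maximality forces $S \cup \{P\}$ to be affinely dependent; since $S$ itself is affinely independent, the dependence must involve $P$, which therefore lies in the affine span of some subset of $S$ and hence in $L$. Consequently $Z \subseteq L$, and we obtain the cardinality bound $|Z| \le |L| = q^{s-1} \le q^{n-d}$.

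The proof finishes by squeezing this upper bound against Chevalley-Warning. By \Cref{CW}(3), since $n > d$ and $Z$ is nonempty, $|Z| \ge q^{n-d}$. Combined with the upper bound, every inequality in the chain must collapse to equality, so $s-1=n-d$, $|Z|=|L|$, and $Z = L$. Since $L$ is an affine space, this contradicts the hypothesis. The conceptual essence of the argument is this squeeze: maximality of $S$ traps $Z$ inside the low-dimensional affine space $L$, while Chevalley-Warning simultaneously supplies enough zeros to force $Z$ to fill $L$ completely. I do not anticipate any substantive obstacles beyond carefully carrying out the combinatorial step establishing $Z \subseteq L$.
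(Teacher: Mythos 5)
Your proposal is correct and follows essentially the same route as the paper's proof: take a maximal subset of $Z(\f,\Fq^n)$ in general position, observe that maximality traps all of $Z(\f,\Fq^n)$ inside an affine space of dimension at most $n-d$, and squeeze against the Warning bound $N(\f,\Fq^n)\ge q^{n-d}$ to force $Z(\f,\Fq^n)$ to equal that affine space, a contradiction. The only difference is that you make explicit the (correct) equivalence between general position and affine independence for at most $n+1$ points and spell out the containment step, which the paper leaves as a one-line remark.
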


\begin{proof}
By Theorem \ref{CW} (3), $N(\f, \Fq^n) \geq q^{n-d}$. Suppose that a maximal set of points of $Z(\f, \Fq^n)$ in general position has at most $n+1-d$ points. Then this maximal set of points lies in some affine space $\A^{n-d}(\Fq)$. Then $Z(\f, \Fq^n) \subseteq \A^{n-d}(\Fq)$, for otherwise we would have a larger set of points in general position. Since $N(\f, \Fq^n) \ge q^{n-d}$ and $|\A^{n-d}(\Fq)| = q^{n-d}$, it follows that $N(\f, \Fq^n) = q^{n-d}$ and $Z(\f; \Fq^n)$ forms an affine space, which is a contradiction.
\end{proof}

We now begin the proof of Theorem \ref{thm: HB Improvement}. The proof of Theorem \ref{thm: HB Improvement} (1), (2), and (3) closely follows the ideas of Heath--Brown's proof in \cite{Heath-Brown}. 

\begin{proof}[Proof of Theorem 1.4] Since we must check many cases, we give a convenient labeling to easily track where we are in the proof. Case IIB2(b) means we are in the proof of (2), Case B, subcase 2, subsubcase b. \\

By Lemma \ref{lem: general position of zeros}, $Z(\f, \Fq^n)$ contains a maximal set of $t+1 \geq n+2-d$ points in general position. Then $n+1-d \leq t \leq n$. Let $\A^t(\Fq)$ be the $t$-dimensional affine space spanned by these $t+1$ points. Note that $|Z(\f, \A^t(\Fq))| \leq |Z(\f, \Fq^n)|$. \\

%$Z(\f, \A^t(\Fq)) \subseteq \A^t(\Fq)$

\noindent \textit{(1)} \textbf{Case I} Assume $q=2$ and let $S= Z(\f, \A^t(\F_2))$.

\textbf{Case IA} Suppose the hypotheses of Lemma \ref{lem: HB Lemma 2} (1) are satisfied. Then $Z(\f, \A^t(\F_2)) = \A^t(\F_2)$ and
\begin{align*}
N(\f, \F_2^n) &\geq N(\f,\A^t(\F_2)) = |Z(\f, \A^t(\F_2))| = |\A^t(\F_2)| =2^t \\ &\geq 2^{n+1-d} = 2^{n-d}+2^{n-d} \geq 2^{n-d}+2.
\end{align*}

\textbf{Case IB} Suppose the second hypothesis of Lemma \ref{lem: HB Lemma 2} (1) is not satisfied. Then there exists a 2-dimensional affine space $L_{0} \subseteq \A^t(\F_2)$ with $N(\f, L_{0}) = 3$. 

Take $L$ and $L'$ as in Lemma \ref{lem: HB Lemma 1}. Then $N(\f, L) = N(\f, L_{0}) = 3$. Note dim$(L) = k \leq d$, because if dim$(L) >d$, then $2 \mid N(\f, L)$ by Theorem \ref{CW} (2) and Theorem \ref{thm: HB Theorem 1}, which is a contradiction. Then Lemma \ref{lem: HB Lemma 1} yields the following bound
\begin{align*}
N(\f, \F_2^n) &\geq N(\f, L)+ \dfrac{2^{n-k}-1}{2-1}(N(\f, L')-N(\f, L)) \geq 3+ 2^{n-k}-1 \geq 2^{n-d} + 2.
\end{align*}

\indent \textbf{Remark}: If we use Theorem \ref{CW} (2) and Theorem \ref{thm: HB} (1), then the result follows more easily because $2^{n-d} + 2$ is the smallest integer greater than $2^{n-d}$ that is divisible by $2$. However, our intermediate estimates above give potentially better results. \\

\noindent \textit{(2)} \textbf{Case II} Assume $q \geq 3$ and let 
$S=Z(\f, \A^t(\Fq))$.

\textbf{Case IIA} Suppose the second hypothesis of Lemma \ref{lem: HB Lemma 2} (2) is satisfied. Then $Z(\f, \A^t(\F_q)) = \A^t(\F_q)$ and
\[
N(\f, \Fq^n) \geq N(\f, \A^t(\Fq)) = |Z(\f,\A^t(\Fq))|=|\A^t(\Fq)| =q^{t} \geq q^{n+1-d} > 2q^{n-d}.
\]

\textbf{Case IIB}  
Suppose the second hypothesis of Lemma \ref{lem: HB Lemma 2} (2) is not satisfied. 
Then there is a line $L_{0} \subseteq \A^t(\Fq)$ such that $2 \leq |L_0 \cap S|$ 
and $2 \leq N(\f, L_{0}) \leq q-1$. 
Take $L$ and $L'$ as in Lemma \ref{lem: HB Lemma 1}. 
Then $2 \leq N(\f, L) = N(\f, L_{0}) \leq q-1$, and so 
$\dim(L) = k \leq d$ by Lemma \ref{lem: if dim=d}.
%Note dim$(L) = k \leq d$, because if dim$(L) >d$, then $q \mid N(\f, L)$ 
%by Theorem \ref{CW} (4) and Theorem \ref{thm: HB Theorem 1}, which is a contradiction. 
Then there are three subcases.

\indent \indent \textbf{Case IIB(1)} Suppose $k \leq d-2$. By Lemma \ref{lem: HB Lemma 1},
\begin{align*}
N(\f, \Fq^n) &\geq N(\f, L)+ \dfrac{q^{n-k}-1}{q-1}(N(\f, L')-N(\f, L))\\ 
&> \dfrac{q^{n-k}-1}{q-1} \geq  \dfrac{q^{n+2-d}-1}{q-1} > q^{n+1-d} \geq 3q^{n-d} >2q^{n-d} .
\end{align*}

\indent \indent \textbf{Case IIB(2)} Suppose $k=d$. Then $2 \leq N(\f, L) \leq q-1$. 
Then by Lemma \ref{lem: HB Lemma 3}, we find $N(\f, \Fq^n) \geq 2q^{n-d}$.

\indent \indent \textbf{Case IIB(3)} Suppose $k = d-1$. Then either
\begin{itemize}[leftmargin=1in]
\item[(a)] $N(\f, L') - N(\f, L) \geq 2$ 
\item[(b)] $N(\f, L') - N(\f, L) =1$, and thus $3 \leq N(\f, L') \leq q$, or

\end{itemize}

\indent \indent \indent \textbf{Case IIB3(a)} Suppose (a) holds. 
Define $L'$ as in Lemma \ref{lem: HB Lemma 1}. 
Then Lemma \ref{lem: HB Lemma 1} gives the following bound.
\begin{align*}
N(\f, \Fq^n) &\geq N(\f, L)+ \dfrac{q^{n-k}-1}{q-1}(N(\f, L')-N(\f, L))\\ 
&> \dfrac{q^{n-k}-1}{q-1}(N(\f, L')-N(\f, L)) \geq 2 \cdot \dfrac{q^{n+1-d}-1}{q-1} > 2q^{n-d}.
\end{align*}

\indent \indent \indent \textbf{Case IIB3(b)} %Suppose (a) holds. Then $\dim(L')=d$. First assume $q \geq 4$. Theorem \ref{thm: HB Theorem 1} implies $N(\f, L^*) \geq 3$ for every affine space of dimension $d$ parallel to $L'$ because $N(\f, L^*) \equiv N(\f, L') \not \equiv 0,1, 2 \bmod{q}$. Since there are $q^{n-d}$ affine spaces of dimension $d$ that are parallel to $L'$, it follows that $N(\f, \Fq^n) \geq 3q^{n-d} > 2q^{n-d}$.  \\
Suppose (b) holds. We have $\dim(L')=d$. First assume $2 \leq N(\f,L) \leq q-2$. 
Then $3 \leq N(\f,L') \leq q-1$. 
By Theorem \ref{thm: HB Theorem 1}, we have 
$N(\f, L^*) \equiv N(\f,L') \bmod{q}$ for every affine 
space $L^*$ of dimension $d$ parallel to $L'$. 
Then $N(\f, L^*) \notin \{0,1,2\}$ because $N(\f,L') \not \equiv 0,1,2 \bmod{q}$. 
Thus, $N(\f, L^*) \geq 3$. 
Since there are $q^{n-d}$ affine spaces $L^*$ of dimension $d$ that are parallel to $L'$, 
it follows that $N(\f, \Fq^n) \geq 3q^{n-d}>2q^{n-d}$.

Now assume $N(\f,L)=q-1$. Then $N(\f, L') = q$.
Let $v \in Z(\f, L)$, let $w \in Z(\f,L') \backslash L$, and let $L'_0$ 
be the line containing $v,w$.  Then $L'_0 \subset L'$ 
and $N(\f, L'_0) = 2$. Thus $|L'_0 \cap S| = 2$.
This means that we can assume from the start that $N(\f, L) = 2$.
If $q \ge 4$, then $N(\f, L) =2 \le q-2$ and we finish using the argument in the previous paragraph.

It remains to consider the case that $q = 3$.
For the first part of the following argument, it is more convenient to
consider general values of $q$.  At the end of the argument, we specialize 
to $q=3$.

Let $L''$ be an affine subspace of dimension $d+1$ containing $L$.
Since $N(\f,L')-N(\f,L)\ge 1$ for any affine space of dimension $d$ with
$L \subset L' \subset L''$, Lemma \ref{lem: HB Lemma 1} implies 
\begin{align*}
N(\f, L'') &\geq N(\f,L)+ \dfrac{q^{(d+1) - (d-1)}-1}{q-1}(N(\f,L')-N(\f,L))
\ge N(\f,L)+\dfrac{q^{2}-1}{q-1} \\
&= (q-1)+\dfrac{q^2-1}{q-1} = (q-1)+(q+1)=2q.
\end{align*}

Suppose that $N(\f, L'') \ge 2q+1$ for all affine spaces $L''$ having 
dimension $d+1$ and containing $L$.  
Then $N(\f, L'') \ge 3q$ for all such affine spaces $L''$ by 
Theorem \ref{CW} (4) and Theorem \ref{thm: HB Theorem 1}.
Since $N(\f,L'')-N(\f,L') \ge 3q - q = 2q$, Lemma \ref{lem: HB Lemma 1} implies that
\begin{align*}
N(\f, \Fq^n) &\geq N(\f,L')+ \dfrac{q^{n-d}-1}{q-1}(N(\f,L'')-N(\f,L'))
\ge q +\dfrac{q^{n-d}-1}{q-1} \cdot 2q \\
&= q + 2q (q^{n-d -1} + \cdots + q + 1) \ge q + 2q^{n-d} > 2q^{n-d}.
\end{align*}

Thus we can assume that there exists $L''$ as above such that $N(\f, L'') = 2q$.
Now assume that $q  = 3$, and so $N(\f, L'') = 6$.

Let $Z(\f, L'') = \{P_0, \ldots, P_5\}$.
We can assume that $Z(\f, L) = \{P_0, P_1\}$ and $Z(\f, L') = \{P_0, P_1, P_2\}$.
We can assume that $P_0 = (0, \ldots, 0)$ and $P_1 = (1,0,0, \ldots, 0)$.
Since $P_2 \notin L$, we can assume that $P_2 = (0,1,0, \ldots, 0)$.
Since $P_3 \notin L'$, we can assume that $P_3 = (0,0,1,0, \ldots, 0)$.
Since $P_4, P_5 \in L'' \backslash L'$, we can assume that
$P_4 = (a_1, a_2, \ldots, a_n)$ and $P_5 = (b_1, \ldots, b_n)$ where
some $a_i \ne 0$ with $i \ge 3$ and some $b_j \ne 0$ with $j \ge 3$.
We have $3 \le \dim(\Span\{P_1, \ldots, P_5\}) \le 5$.

We will find an affine space $\widehat{L}$ of dimension $n-1$ such that 
$N(\f, \widehat{L} \cap L'') \equiv 2 \bmod 3$. 
Then since $\widehat{L} \cap L''$ is nonempty and 
$N(\f, \widehat{L} \cap L'') \ne 6$, it follows that 
$\dim(\widehat{L} \cap L'') = (d+1) -1 = d$.
Then \Cref{lem: HB Lemma 3} implies that $N(\f, \F_q^n) \ge 2q^{n-d}$.

Suppose first that $\dim(\Span\{P_1, \ldots, P_5\}) = 5$.
Then we can assume that 
\[P_4 = (0,0,0,1,0, \ldots, 0) \text{ and } P_5 = (0,0,0,0,1,0, \ldots, 0).\]
Let $\widehat{L}: x_1 + x_2 + \cdots + x_5 = 1$, an affine space of dimension $n-1$.
Then $Z(\f, \widehat{L} \cap L'') = \{P_1, P_2, \ldots, P_5\}$.
Thus $N(\f, \widehat{L} \cap L'') = 5 \equiv 2 \bmod 3$.

Now suppose that $\dim(\Span\{P_1, \ldots, P_5\}) = 4$.
Then we can assume that 
\[P_4 = (0,0,0,1,0, \ldots, 0) \text{ and } P_5 = (b_1, b_2, b_3, b_4,0,\ldots,0).\]
Suppose that $b_i = 0$ for some $i \le 4$.
Let $\widehat{L}: x_i = 0$. 
Then $N(\f, \widehat{L} \cap L'') = 5 \equiv 2 \bmod 3$.
Now suppose that each $b_i \ne 0$.
Suppose that $b_i = 1$ for some $i \le 4$.
Let $\widehat{L}: x_i = 1$. 
Then $N(\f, \widehat{L} \cap L'') = 2$.
Now suppose that $b_1 = \cdots = b_4 = 2$.
Let $\widehat{L}: x_1 - x_2 + x_3 - x_4 = 0$. 
Then $N(\f, \widehat{L} \cap L'') = 2$.

Suppose that $\dim(\Span\{P_1, \ldots, P_5\}) = 3$.
Then we can assume that 
\[P_4 = (a_1, a_2, a_3,0, \ldots, 0) \text{ and } P_5 = (b_1, b_2, b_3, 0,\ldots,0).\]
Suppose that $a_i = 1$.  If $b_i \in \{0, 2\}$, then let
$\widehat{L}: x_i  = 1$, which gives $N(\f, \widehat{L} \cap L'') = 2$.
Thus we can assume that $b_i = 1$.
Suppose that $a_i = 0$.  If $b_i = 0$, then let $\widehat{L}: x_i  = 0$,
which gives $N(\f, \widehat{L} \cap L'') = 5$.
If $b_i = 1$, then let $\widehat{L}: x_i = 1$,
which gives $N(\f, \widehat{L} \cap L'') = 2$.
Thus we can assume that $b_i = 2$.
Suppose that $a_i = 2$.  If $b_i = 2$, then let $\widehat{L}: x_i = 2$,
which gives $N(\f, \widehat{L} \cap L'') = 2$.
If $b_i = 1$, then let $\widehat{L}: x_i = 1$, 
which gives $N(\f, \widehat{L} \cap L'') = 2$.
Thus we can assume that $b_i = 0$.

It follows that we can assume $b_i \equiv 2 - a_i \bmod 3$.
That is,
\[P_4 = (a_1, a_2, a_3, 0 \ldots, 0) \text{ and } 
P_5 = (2-a_1, 2-a_2, 2-a_3, 0, \ldots, 0).\]
Then $(a_1, a_2, a_3) \ne (0,0,0), (1,1,1), (2,2,2)$
because $P_4 \ne P_5$ and $P_4, P_5$ are distinct from $P_0$.

Suppose that $(a_1, a_2, a_3) = (0,1,2)$ or some permutation of these coordinates.
Then $(b_1, b_2, b_3) = (2,1,0)$ and we let $\widehat{L}:x_1 + x_3 = 2$. 
Then $N(\f, \widehat{L} \cap L'') = 2$.
Suppose that $(a_1, a_2, a_3) = (0,0,2)$.  Then $(b_1, b_2, b_3) = (2,2,0)$ and
we take $\widehat{L}: x_1 + x_3 = 2$.
Then $N(\f, \widehat{L} \cap L'') = 2$.
Suppose that $(a_1, a_2, a_3) = (1,1,0)$.  Then $(b_1, b_2, b_3) = (1,1,2)$ and
we take $\widehat{L}: x_1 + x_2 = 2$.
Then $N(\f, \widehat{L} \cap L'') = 2$.
Suppose that $(a_1, a_2, a_3) = (2,2,1)$.  
Then $P_5 = (b_1, b_2, b_3) = (0,0,1)= P_3$, which is a contradiction.
This covers all possible cases and the proof of Case II is complete.

\medskip
\noindent
\textbf{Remark:}  Some additional information can be obtained from the proof of Case IIB.
Suppose that $q = 3$, $\dim(L'') = d+1$, and $N(\f, L'') = 6$, as above.
%Note that $L''$ is a subspace because $P_0 \in L''$.
Let $\widehat{L}$ be an affine space of dimension $n-1$ such that
$N(\f, \widehat{L} \cap L'') \equiv 2 \bmod 3$. 
Then, as noted above, $\dim(\widehat{L} \cap L'') = d$.
We can write
$L''= (\widehat{L} \cap L'') \cup (\widehat{L}_1 \cap L'') \cup (\widehat{L}_2 \cap L'')$
where $\widehat{L} \cap L'', \widehat{L}_1 \cap L'', \widehat{L}_2 \cap L''$
are parallel affine spaces each having dimension $d$, and thus the union is disjoint.
Then 
\[6 = N(\f, L'') = N(\f, \widehat{L} \cap L'') + N(\f, \widehat{L}_1 \cap L'') 
+ N(\f, \widehat{L}_2 \cap L'').\]
By \Cref{thm: HB Theorem 1}, each term on the right is congruent to 2 modulo 3 and
thus each term is at least $2$. Since the sum equals $6$, it follows that each term
equals $2$.  

As a consequence, it follows that $\dim(\Span\{P_1, \ldots, P_5\}) \ne 5$
because in that case above, we found an affine space $\widehat{L}$ such that 
$N(\f, \widehat{L} \cap L'') = 5$. 

%Suppose instead that $\dim(\Span\{P_1, \ldots, P_5\}) = 5$.  
%Let $\widehat{L}: x_1 + \cdots + x_5 = 1$,
%$\widehat{L}_1: x_1 + \cdots + x_5 = 0$, and $\widehat{L}_2: x_1 + \cdots + x_5 = 2$.
%Then $\widehat{L} \cap L'', \widehat{L}_1 \cap L'', \widehat{L}_2 \cap L''$ are parallel
%affine planes each having dimension at least $d$.  The dimension is exactly
%$d$ because $P_0 \in L''$ but $P_0 \notin \widehat{L}_1$.
%Then 
%$L''= (\widehat{L} \cap L'') \cup (\widehat{L}_1 \cap L'') \cup (\widehat{L}_2 \cap L'')$
%is a disjoint union.  Then \Cref{thm: HB Theorem 1} implies
%that $6 = N(\f, L'') \ge 5 + 2 + 2$, a contradiction.

\begin{comment}
%%%%%%%%%%%%%%%%%%%%%%%%%%%%%%%%%%%%
%%%%%%%%%%%%OLD CASE II B %%%%%%%%%%
%%%%%%%%%%%%%%%%%%%%%%%%%%%%%%%%%%%%
%\textbf{Case IIB}  Suppose the second hypothesis of Lemma \ref{lem: HB Lemma 2} (2) is not satisfied. Then there is a line $L_{0} \subseteq \A^t(\Fq)$ such that $2 \leq |L_0 \cap S|$ and $2 \leq N(\f, L_{0}) \leq q-1$. Take $L$ and $L'$ as in Lemma \ref{lem: HB Lemma 1}. Then $2 \leq N(\f, L) = N(\f, L_{0}) \leq q-1$, and so $\dim(L) = k \leq d$ by Lemma \ref{lem: if dim=d}.
%Note dim$(L) = k \leq d$, because if dim$(L) >d$, then $q \mid N(\f, L)$ by Theorem \ref{CW} (4) and Theorem \ref{thm: HB Theorem 1}, which is a contradiction. 
%Then there are three subcases.

%\indent \indent \textbf{Case IIB(1)} Suppose $k \leq d-2$. By Lemma \ref{lem: HB Lemma 1},
%\begin{align*}
%N(\f, \Fq^n) &\geq N(\f, L)+ \dfrac{q^{n-k}-1}{q-1}(N(\f, L')-N(\f, L))\\ 
%&> \dfrac{q^{n-k}-1}{q-1} \geq  \dfrac{q^{n+2-d}-1}{q-1} > q^{n+1-d} \geq 3q^{n-d} >2q^{n-d} .
%\end{align*}

%\indent \indent \textbf{Case IIB(3)} Suppose $k=d$. Then $2 \leq N(\f, L) \leq q-1$. Then by Lemma \ref{lem: HB Lemma 3}, we find $N(\f, \Fq^n) \geq 2q^{n-d}$. \\

%\indent \indent \textbf{Case IIB(2)} Suppose $k = d-1$. Then either
%\begin{itemize}[leftmargin=1in]
%\item[(a)] $N(\f, L') - N(\f, L) =1$, and thus $3 \leq N(\f, L') \leq q$, or
%\item[(b)] $N(\f, L') - N(\f, L) \geq 2$ \\
%\end{itemize}

%\indent \indent \indent \textbf{Case IIB2(b)} Suppose (b) holds. Define $L'$ as in Lemma \ref{lem: HB Lemma 1}. Then Lemma \ref{lem: HB Lemma 1} gives the following bound.
%\begin{align*}
%N(\f, \Fq^n) &\geq N(\f, L)+ \dfrac{q^{n-k}-1}{q-1}(N(\f, L')-N(\f, L))\\ 
%&> \dfrac{q^{n-k}-1}{q-1}(N(\f, L')-N(\f, L)) \geq 2 \cdot \dfrac{q^{n+1-d}-1}{q-1} > 2q^{n-d}.
%\end{align*}

%\indent \indent \indent \textbf{Case IIB2(a)} %Suppose (a) holds. Then $\dim(L')=d$. First assume $q \geq 4$. Theorem \ref{thm: HB Theorem 1} implies $N(\f, L^*) \geq 3$ for every affine space of dimension $d$ parallel to $L'$ because $N(\f, L^*) \equiv N(\f, L') \not \equiv 0,1, 2 \bmod{q}$. Since there are $q^{n-d}$ affine spaces of dimension $d$ that are parallel to $L'$, it follows that $N(\f, \Fq^n) \geq 3q^{n-d} > 2q^{n-d}$.  \\
%Suppose (a) holds. Then $\dim(L')=d$. First assume $2 \leq N(\f,L) \leq q-2$. Then 
%$3 \leq N(\f,L') \leq q-1$. We have $N(\f, L^*) \equiv N(\f,L') \bmod{q}$ for every affine 
%space of dimension $d$ parallel to $L'$. Then $N(\f, L^*) \notin \{0,1,2\}$ because 
%$N(\f,L') \not \equiv 0,1,2 \bmod{q}$. Thus, $N(\f, L^*) \geq 3$. 
%Since there are $q^{n-d}$ affine spaces $L^*$ of dimension $d$ that are parallel to $L'$, 
%it follows that $N(\f, \Fq^n) \geq 3q^{n-d}>2q^{n-d}$.

%Now assume $N(\f,L)=q-1$. First assume $n-d=1$. 
%Since $N(\f,L')-N(\f,L)=1$, Lemma \ref{lem: HB Lemma 1} implies 
%\begin{align*}
%N(\f, \A^n(\Fq)) &\geq N(\f,L)+ \dfrac{q^{n-(d-1)}-1}{q-1}(N(\f,L')-N(\f,L))=N(\f,L)+\dfrac{q^{n-(d-1)}-1}{q-1} \\
%&= (q-1)+\dfrac{q^2-1}{q-1} = (q-1)+(q+1)=2q=2q^{n-d}.
%\end{align*}

%Note that we have completed the proof of (2) (which is Case II) for the
%case $n-d = 1$.
%Now assume $n-d>1$. 
%Since $\Fq^n$ is a union of $q^{n-(d+1)}$ parallel affine spaces of 
%dimension $d+1$ and each of these affine spaces has at least $2q$ zeros by 
%the case ``$n-d = 1$", 
%it follows that $N(\f,\A^n(\Fq)) \geq 2q\cdot q^{n-(d+1)} = 2q^{n-d}$. 

%\textbf{FLAW:} We showed $n-d=1$ and we are overlooking a hypothesis. We need to guarantee that these zeros don't form an affine space. If we know that the zeros inside of the affine spaces don't form an affine space then we're good, but it's possible that some of the zeros form an affine space.

\end{comment}

\noindent \textit{(3)} \textbf{Case III}
We begin with the case $q=2$. Since
\[\frac{2^{n+1-d} -1}{2^{n-d}} = 2 - \frac{1}{2^{n-d}} 
\le \begin{cases} \frac{3}{2} \le \frac{n+2-d}{2} & \text{ if $n-d=1$}\\
2 \le \frac{n+2-d}{2} & \text{ if $n-d \ge 2$,} \end{cases}\]
it follows that 
$\dfrac{2^{n+1-d}-1}{2-1}\cdot \dfrac{2}{n+2-d} \le 2^{n-d} < 2^{n-d} +2$,
which is the lower bound in Theorem \ref{thm: HB Improvement} (1).
%First assume $n-d \geq 2$, which implies $n+2-d \geq 4$. Thus, $\dfrac{2}{n+2-d} \leq %\dfrac{1}{2}$. Then
%\[
%\dfrac{2^{n+1-d}-1}{2-1}\cdot \dfrac{2}{n+2-d} \leq (2^{n+1-d}-1)\cdot \dfrac{1}{2} < 2^{n-d} < %2^{n-d}+2.
%\]
%Now assume $n-d=1$. Then
%\[
%\dfrac{2^{n+1-d}-1}{2-1}\cdot \dfrac{2}{n+2-d} = (2^2-1)\cdot \dfrac{2}{3} = 2 < 2^{n-d}+2.
%\]

Now assume $q \geq 3$. Let $v = n+1-d$, let $m = \Big\lfloor \dfrac{qv}{v+1} \Big\rfloor$, and let $S = Z(\f, \A^t(\Fq))$. Then $v \geq 2$ and $2 \leq m \leq q-1$. \\

\textbf{Case IIIA} Suppose the hypothesis of Lemma \ref{lem: HB Lemma 2} (4) is satisfied. Then $|Z(\f, \A^t(\F_q))| \geq \dfrac{m^{t+1}-1}{m-1}$. Thus,
\begin{align*}
N(\f, \Fq^n) &\geq N(\f, \A^t(\Fq)) \geq \dfrac{m^{t+1}-1}{m-1} \geq \dfrac{m^{n+2-d}-1}{m-1}.
\end{align*}

We will now show that $\dfrac{m^{n+2-d}-1}{m-1} > \dfrac{q^{n+1-d}-1}{q-1}\cdot \dfrac{q}{n+2-d}$. This is equivalent to showing
$$\dfrac{m^{v+1}-1}{m-1} > \dfrac{q^v-1}{q-1} \cdot \dfrac{q}{v+1}, $$
which is equivalent to showing 
$$ m^v+m^{v-1} + \dots + m + 1 > \dfrac{q^v+q^{v-1}+\dots+q}{v+1}.$$

First assume $v \geq 2$, $q \geq 2v+1$, and $(q,v) \neq (5,2), (7,2)$. Then by Lemma \ref{lem: HB Inequality}, $m^v > \dfrac{q^v}{v+1}$. Now observe that 
\[
m = \Big\lfloor \dfrac{qv}{v+1} \Big\rfloor \leq \dfrac{qv}{v+1} < q.
\]
Since $\dfrac{q}{m} >1$, we have for $1 \leq i \leq v$ that 
\[
m^i > \dfrac{q^{v-i}}{m^{v-i}} \cdot \dfrac{q^i}{v+1} \ge \dfrac{q^i}{v+1}.
\]
Therefore, $m^v+m^{v-1}+\dots+1 > \dfrac{q^v+q^{v-1}+ \dots +q}{v+1}$.

Suppose $(q,v)=(5,2)$. Since $\f$ is a homogeneous system, $N(\f, \F_{5}^n) \equiv 1 \bmod 4$ by Equation (\ref{eq: N cong q-1}). Since $n-d=v-1=1$, Theorem \ref{thm: HB Improvement} (2) implies $N(\f, \F_{5}^n) \geq 10$. Thus, 
\[
N(\f, \F_{5}^n) \geq 13 > 10 = \dfrac{5^2-1}{5-1} \cdot \dfrac{5}{3} = \dfrac{q^{n+1-d}-1}{q-1} \cdot \dfrac{q}{n+2-d}.
\]

%In this case,
%\begin{align*}
%    \dfrac{q^{n+1-d}-1}{q-1}\cdot \dfrac{q}{n+2-d} &= \dfrac{5^2-1}{5-1}\cdot \dfrac{5}{3} = 10 = 2\cdot 5^1 = 2 \cdot q^{n-d},
%\end{align*}
%the lower bound from Theorem \ref{thm: HB Improvement} (2).

Now suppose $(q,v)=(7,2)$. Since $\f$ is a homogeneous system, $N(\f, \F_{7}^n) \equiv 1 \bmod 6$ by Equation (\ref{eq: N cong q-1}). Since $n-d=1$, Theorem \ref{thm: HB Improvement} (2) implies $N(\f, \F_{7}^n) \geq 14$. Thus, 
\[
N(\f, \F_{7}^n) \geq 19 >\dfrac{56}{3} = \dfrac{7^2-1}{7-1} \cdot \dfrac{7}{3} = \dfrac{q^{n+1-d}-1}{q-1} \cdot \dfrac{q}{n+2-d}.
\] 
Thus, if $q \geq 2v+1$, then the proof is finished in this case.
\bigskip

\indent \textbf{Remark}: If we use Theorem \ref{thm: HB Improvement} (4), proved later, then we obtain a better bound in the cases $(q,v)=(5,2), (7,2)$. See the beginning of Section \ref{Comparing (3) and (4)}.
\bigskip

\indent \indent Now suppose $q \leq 2v$. Then $2v+2 \geq q+2$. Thus, 
\[
2 \geq \dfrac{q+2}{v+1} > \dfrac{q+1+\frac{1}{q-1}}{v+1} = \dfrac{q^2}{(v+1)(q-1)}.
\]

Multiply both sides by $q^{n-d}$, substitute $n+1-d$ for $v$, and apply 
Theorem \ref{thm: HB Improvement} (2) to obtain
\begin{align*}
    N(\f,\Fq^n) &\geq 2q^{n-d} > \dfrac{q^{n+2-d}}{(v+1)(q-1)} 
    > \dfrac{q^{n+2-d}-q}{(v+1)(q-1)} =  \dfrac{q^{n+1-d}-1}{q-1}\cdot \dfrac{q}{n+2-d}.
\end{align*}

\indent \textbf{Remark}: Heath-Brown's proof of this part omitted the case when $q \leq 2v$.  
The case when $q \le 2v$ appears again in \Cref{cor to comparison}.

\medskip

\textbf{Case IIIB} Suppose the hypothesis of Lemma \ref{lem: HB Lemma 2} (4) is not satisfied. Then there is a line $L_{0} \subseteq \A^t(\Fq)$ with $2 \leq N(\f, L_{0}) \leq m$. Take $L$ and $L'$ as in Lemma \ref{lem: HB Lemma 1}. Then $2 \leq N(\f, L) = N(\f, L_{0}) \leq m \leq q-1$, and so $\dim(L) = k \leq d-1$ by Lemma \ref{lem: if dim=d} (2). Then we have the following two subcases. 

\indent \indent \textbf{Case IIIB(1)} Suppose $k \leq d-2$. Then $n-k \ge n+2 - d$ and 
Lemma \ref{lem: HB Lemma 1} implies that
\begin{align*}
N(\f, \Fq^n) & \ge N(\f,L)+ \dfrac{q^{n+2-d}-1}{q-1}(N(\f, L')-N(\f,L)) > \dfrac{q^{n+2-d}-1}{q-1} \\ &>  \dfrac{q(q^{n+1-d}-1)}{q-1} >  \dfrac{q^{n+1-d}-1}{q-1}\cdot \dfrac{q}{n+2-d}.
\end{align*}

\indent \indent \textbf{Case IIIB(2)} Suppose $k = d-1$.
Then $\dim(L') = d$.  Since $N(\f, L') \ge N(\f,L) + 1 \ge 3$, 
Lemma \ref{lem: if dim=d} (2) implies that $N(\f,L') \ge q$.  
Since $N(\f,L) \le m$, we have $N(\f, L') - N(f,L) \ge q-m$.
Since $m = \Big\lfloor \dfrac{qv}{v+1} \Big\rfloor$,
Lemma \ref{lem: HB Lemma 1} implies that
\begin{align*}
N(\f, \Fq^n) &> \dfrac{q^{n-k}-1}{q-1}(N(\f, L')- N(\f, L) \geq \dfrac{q^{n+1-d}-1}{q-1}(q-m)\\ &\geq \dfrac{q^{n+1-d}-1}{q-1}\Big(q-\dfrac{qv}{v+1}\Big)  
= \dfrac{q^{n+1-d}-1}{q-1}\cdot \dfrac{q}{v+1} 
= \dfrac{q^{n+1-d}-1}{q-1}\cdot \dfrac{q}{n+2-d}.
\end{align*}

\noindent \textit{(4)} By Theorem \ref{thm: HB Improvement} (2), $N(\f, \Fq^n) \geq 2q^{n-d}$. By Theorem \ref{CW} (4), $N(\f, \Fq^n) = 2q^{n-d} + aq$ for some $a \in \Z_{\geq 0}$. Since $N(\f, \Fq^n) \equiv 1 \bmod q-1$, we know $2 + a \equiv 1 \bmod q-1$, which implies $a \equiv -1 \bmod q-1$. Since $a \in \Z_{\geq 0}$, we have $a \geq q-2$, which gives the desired result.\\

We will demonstrate that bounds (1), (2) and (4) in \Cref{thm: HB Improvement} 
are sharp under these hypotheses in Section \ref{Sharpness}, which will complete the proof.
\end{proof}

\section{Optimality of Theorem \ref{thm: HB Improvement}}\label{Sharpness}

\noindent \textit{Sharpness of Theorem \ref{thm: HB Improvement} (1)} We will show that Theorem \ref{thm: HB Improvement} (1) is optimal for every $d \geq 2$ and $n \in \{d+1,d+2\}$.

For each $d \in \Z$, $d \geq 1$, let $g_d \in \F_2[x_1,\dots,x_d]$ be a homogeneous form of degree $d$ having no nontrivial zero over $\F_2.$ For example, $g_d$ could be chosen to be a norm form.

\begin{proposition}
Let $d \in \Z$, $d \geq 2$, and let $n = d+1$. Then there exists a homogeneous form $f_d \in \F_2[x_1, \dots, x_n]$ of degree $d$ such that $|Z(f_d, \F_2^n)| = 4 = 2^{n-d}+2$ and $Z(f_d, \F_2^n)$ is not an affine space in $\F_2^n$.
\end{proposition}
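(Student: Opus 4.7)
My plan is to construct $f_d$ explicitly. Set
\[
f_d(x_1, \dots, x_{d+1}) := g_d(x_1, \dots, x_d) + x_1 x_2 \cdots x_{d-1} \, x_{d+1}.
\]
Since $d \geq 2$, the monomial $x_1 x_2 \cdots x_{d-1} x_{d+1}$ has exactly $d$ factors, so $f_d$ is a homogeneous form of degree $d$ in $n = d+1$ variables.

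Next I would enumerate $Z(f_d, \F_2^n)$ by splitting on whether $x_1 x_2 \cdots x_{d-1}$ vanishes. If $x_1 x_2 \cdots x_{d-1} = 0$, then $f_d(\vec{x}) = g_d(x_1, \dots, x_d)$, and by the choice of $g_d$ this vanishes only at $(x_1, \dots, x_d) = (0, \dots, 0)$; with $x_{d+1}$ free, this produces the two zeros $(0, \dots, 0, 0)$ and $(0, \dots, 0, 1)$. Otherwise $x_1 = x_2 = \cdots = x_{d-1} = 1$, and $f_d(\vec{x}) = g_d(1, \dots, 1, x_d) + x_{d+1}$. Both points $(1, \dots, 1, 0)$ and $(1, \dots, 1, 1)$ in $\F_2^d$ are nontrivial, so $g_d$ equals $1$ at each, forcing $x_{d+1} = 1$. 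This yields the two additional zeros $(1, \dots, 1, 0, 1)$ and $(1, \dots, 1, 1, 1)$. In total $|Z(f_d, \F_2^n)| = 4 = 2^{n-d} + 2$, as required.

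Finally, to verify that these four zeros do not form an affine space, I would use the fact that any two-dimensional affine space over $\F_2$ has the form $\{v, v + w_1, v + w_2, v + w_1 + w_2\}$ whose four elements sum to $4v + 2w_1 + 2w_2 = 0$. Summing our four zeros coordinatewise, positions $1$ through $d-1$ contribute $0+0+1+1 = 0$, position $d$ contributes $0+0+0+1 = 1$, and position $d+1$ contributes $0+1+1+1 = 1$, so the sum is $(0,\dots,0,1,1) \neq 0$. Hence the four zeros cannot span a two-dimensional affine space; since they are four distinct points, no affine space of any other dimension can equal this set.

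The main obstacle is locating the right construction: a naive choice such as $g_d(x_1,\dots,x_d) + x_{d+1}^d$ yields $2^d$ zeros rather than $4$, so the second summand must be crafted so that its product structure collapses most of the nontrivial $\F_2^d$-directions. Appending the factor $x_1 x_2 \cdots x_{d-1}$ enforces $x_1 = \cdots = x_{d-1} = 1$ in the nontrivial case, which is precisely what trims the zero set down to four points.
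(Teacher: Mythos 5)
Your proof is correct, and it takes a genuinely different route from the paper. The paper argues by induction on $d$: it starts from the base case $f_2 = x_1x_2 - x_3^2$ and builds $f_d = x_{d+1}^d - f_{d-1}(x_1,\dots,x_d)x_{d+1} + g_d(x_1,\dots,x_d)$, counting zeros via the recursion and proving non-affineness by projecting $Z(f_d,\F_2^{d+1})$ onto $Z(f_{d-1},\F_2^d)$ and invoking the inductive hypothesis. You instead give a single closed-form polynomial $g_d(x_1,\dots,x_d) + x_1\cdots x_{d-1}x_{d+1}$, list its four zeros explicitly, and rule out affineness by the parity observation that the four points of a $2$-dimensional affine space over $\F_2$ sum to zero, whereas your four zeros sum to $(0,\dots,0,1,1)$. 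Both constructions lean on the same external input (the anisotropic form $g_d$, e.g.\ a norm form), but yours avoids induction entirely and replaces the projection argument with an elementary coordinate computation, which makes the verification shorter and self-contained; the paper's recursive template has the advantage that essentially the same construction is reused for the companion proposition with $n = d+2$. One small point to make explicit if you write this up: the four listed zeros are pairwise distinct (clear from coordinates $1$ and $d+1$), which is what justifies the final step that a $4$-element affine space over $\F_2$ must be $2$-dimensional.
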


\begin{proof}
For $d=2,$ let $f_2 = x_1x_2-x_3^2$. Then $|Z(f_2, \F_2^3)| = 4$ because if $x_1=0$, then the zeros over $\F_2$ are $(0,0,0)$ and $(0,1,0)$, while if $x_1=1$, then the zeros over $\F_2$ are $(1,0,0)$ and $(1,1,1)$. These four zeros do not form a subspace of $\F_2^3$ and thus do not form an affine space in $\F_2^3$.

Let $d \geq 3$ and suppose by induction that $f_{d-1} \in \F_2[x_1, \dots, x_d]$ is a homogeneous form of degree $d-1$ such that $|Z(f_{d-1},\F_2^d)|=4$ and $Z(f_{d-1},\F_2^d)$ is not an affine space in $\F_2^d$. Let $$f_d = x_{d+1}^d-f_{d-1}(x_1,\dots,x_d)x_{d+1}+g_d(x_1, \dots, x_d).$$
Then $f_d$ is a homogeneous form of degree $d$ in $\F_2[x_1,\dots,x_{d+1}]$. Let $a_1, \dots, a_d \in \F_2$. First suppose $f_{d-1}(a_1, \dots, a_d) \neq 0$. Then $(a_1, \dots, a_d) \neq 0$ and so $g_d(a_1, \dots, a_d) \neq 0$. Then for each $a_{d+1} \in \F_2$, $$f_d(a_1 \dots, a_{d+1})=a_{d+1}^d-a_{d+1}+g_d(a_1,\dots, a_d)=g_d(a_1, \dots, a_d) \neq 0.$$ Now suppose $f_{d-1}(a_1, \dots, a_d)=0.$ Then there exists a unique $a_{d+1}\in \F_2$ such that $a_{d+1}^d+g_d(a_1, \dots, a_d)=0.$ Thus, $|Z(f_d,\F_2^{n})|=|Z(f_d,\F_2^{d+1})|=|Z(f_{d-1},\F_2^d)|=4.$

Let $\pi : \F_2^{d+1} \rightarrow \F_2^d$ be the projection map defined by $\pi(a_1, \dots, a_{d+1}) = (a_1, \dots, a_d)$. Then $\pi(Z(f_d,\F_2^{d+1}))= Z(f_{d-1},\F_2^d).$ Since $Z(f_{d-1},\F_2^d)$ is not an affine space in $\F_2^d$, it follows that $Z(f_d,\F_2^{d+1})$ is not an affine space in $\F_2^{d+1}=\F_2^n.$
\end{proof}

\begin{proposition}
Let $d \in \Z$, $d \geq 2$, and let $n = d+2$. Then there exists a homogeneous form $f_d \in \F_2[x_1, \dots, x_n]$ of degree $d$ such that $|Z(f_d, \F_2^n)|=6=2^{n-d}+2$ and $Z(f_d, \F_2^n)$ is not an affine space in $\F_2^n$.
\end{proposition}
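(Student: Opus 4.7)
The plan is to induct on $d \geq 2$, paralleling the construction in the preceding proposition. The inductive invariant I will carry is the following: $f_d \in \F_2[x_1, \dots, x_{d+2}]$ is a homogeneous form of degree $d$ with $|Z(f_d, \F_2^{d+2})| = 6$ and $f_d(e_3) = 0$, where $e_3$ denotes the vector $(0,0,1,0,\dots,0)$. Once such $f_d$ is produced, the claim that $Z(f_d, \F_2^{d+2})$ is not an affine space is automatic, since affine spaces over $\F_2$ have cardinality a power of two and $6$ is not.

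For the base case $d=2$, I would take
\[
f_2(x_1,x_2,x_3,x_4) = x_1^2 + x_1 x_2 + x_2^2 + x_3 x_4.
\]
The binary quadratic $N_2(x_1,x_2) = x_1^2 + x_1 x_2 + x_2^2$ is the norm form of $\F_4/\F_2$ and hence vanishes only at $(0,0)$. Thus $f_2 = 0$ if and only if $x_3 x_4 = N_2(x_1,x_2)$, which yields $3$ pairs $(x_3,x_4)$ when $(x_1,x_2)=(0,0)$ and exactly $1$ pair when $(x_1,x_2) \ne (0,0)$, so $|Z(f_2, \F_2^4)| = 3 + 3 \cdot 1 = 6$, and clearly $f_2(0,0,1,0) = 0$.

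For the inductive step, assume $f_{d-1} \in \F_2[x_1,\dots,x_{d+1}]$ satisfies the invariant. Let $g_d \in \F_2[x_1, x_2, x_4, \dots, x_{d+1}]$ be a norm form of $\F_{2^d}/\F_2$ in those $d$ variables; it is homogeneous of degree $d$ and anisotropic over $\F_2$. Define
\[
f_d(x_1,\dots,x_{d+2}) = x_{d+2}^d + f_{d-1}(x_1,\dots,x_{d+1})\, x_{d+2} + g_d(x_1, x_2, x_4, \dots, x_{d+1}),
\]
which is homogeneous of degree $d$. Evaluating at $e_3$ gives $f_d(e_3) = 0 + f_{d-1}(e_3) \cdot 0 + g_d(0) = 0$, so the invariant persists. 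To count $Z(f_d, \F_2^{d+2})$, I would use that $a_{d+2}^d = a_{d+2}$ as a function on $\F_2$. Writing $\tilde a = (a_1, a_2, a_4, \dots, a_{d+1})$,
\[
f_d(a, a_{d+2}) = a_{d+2}\bigl(1 + f_{d-1}(a)\bigr) + g_d(\tilde a).
\]
If $f_{d-1}(a) = 0$, this determines $a_{d+2}$ uniquely, contributing one zero for each $a \in Z(f_{d-1},\F_2^{d+1})$, for a total of $6$. If $f_{d-1}(a) = 1$, the coefficient of $a_{d+2}$ vanishes and the equation reduces to $g_d(\tilde a) = 0$; anisotropy of $g_d$ forces $\tilde a = 0$, so $a \in \{0, e_3\}$. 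But both of those satisfy $f_{d-1}(a) = 0$ by the invariant, contradicting $f_{d-1}(a) = 1$. Hence this case contributes nothing, and $|Z(f_d, \F_2^{d+2})| = 6$.

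The main subtlety, and the reason for choosing the invariant above, is the choice of a specific auxiliary zero (namely $e_3$) together with the precise selection of variables on which $g_d$ acts. This arrangement is what ensures that the "exceptional" points produced by the anisotropy argument always fall back inside $Z(f_{d-1})$ and therefore contribute nothing to the zero count. Once that invariant is set up, the rest of the verification is routine case analysis.
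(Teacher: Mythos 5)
Your proof is correct and follows essentially the same route as the paper: the same base case $x_1^2+x_1x_2+x_2^2+x_3x_4$ and the same inductive construction $f_d = x_{d+2}^d + f_{d-1}x_{d+2} + g_d$ with an anisotropic norm form $g_d$ of degree $d$ on a $d$-dimensional coordinate subspace. The only difference is bookkeeping: the paper performs a linear change of variables at each step so that $f_{d-1}(0,\dots,0,1)=0$ and omits $x_{d+1}$ from the variables of $g_d$, whereas you carry the zero at $e_3$ as an explicit inductive invariant and omit $x_3$; both devices serve the identical purpose of forcing the exceptional points arising from the anisotropy argument back into $Z(f_{d-1})$.
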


\begin{proof}
Any subset of $\F_2^n$ with cardinality $6$ is not an affine space in $\F_2^n$ because $6$ is not a $2$-power.

For $d=2,$ let $f_2=x_1^2+x_1x_2+x_2^2+x_3x_4.$ Then $|Z(f_2, \F_2^4)|=6$ because there are $4$ zeros in $\F_2^4$ if $x_3=1$ and there are $2$ zeros in $\F_2^4$ if $x_3=0$.

Let $d \geq 3$ and suppose by induction that $f_{d-1} \in \F_2[x_1, \dots,x_{d+1}]$ is a homogeneous form of degree $d-1$ with $|Z(f_{d-1},\F_2^{d+1})|=6.$ By an invertible linear change of variables, we can assume that $f_{d-1}(0,0,\dots,0,1)=0$.

Let $f_d=x_{d+2}^d-f_{d-1}(x_1, \dots,x_{d+1})x_{d+2}+g_d(x_1, \dots, x_d).$ Then $f_d$ is a homogeneous form of degree $d$ in $\F_2[x_1,\dots,x_{d+2}].$ Let $a_1, \dots, a_{d+1} \in \F_2$. First suppose $f_{d-1}(a_1, \dots, a_{d+1}) \neq 0$. If $a_1= \dots = a_d=0,$ then $f_{d-1}(0,0,\dots,0,a_{d+1}) = 0$ for $a_{d+1} \in \F_2$ because $f_{d-1}(0,\dots,0,1)=0$ by assumption. Thus $(a_1, \dots, a_d) \neq 0$ and so $g_d(a_1, \dots, a_d) \neq 0$. Then $$f_d(a_1, \dots, a_{d+1}, a_{d+2}) = a_{d+2}^d - a_{d+2}+g_d(a_1, \dots, a_d)= g_d(a_1, \dots, a_d) \neq 0.$$ 
Now suppose $f_{d-1}(a_1, \dots, a_{d+1})=0$. Then there exists a unique $a_{d+2} \in \F_2$ with $a_{d+2}^d + g_d(a_1, \dots, a_d)=0$. Thus, $|Z(f_d, \F_2^{n})|=|Z(f_d, \F_2^{d+2})|= |Z(f_{d-1},\F_2^{d+1})|=6$. 

\end{proof}

\noindent \textit{Sharpness of Theorem \ref{thm: HB Improvement} (2)} Consider the polynomial 
\[
g(x_1, x_2, x_3) = x_1x_2+x_3^2 +1
\]
over $\F_3$. Since $|Z(g,\F_3^3)|=6$ and $6$ is not a power of 3, $Z(g,\F_3^3)$ does not form an affine space in $\F_3^3$. The lower bound given in Theorem \ref{thm: HB Improvement} (2) is $2q^{n-d} = 2(3)^{3-2} = 6$. \\

\noindent \textit{Sharpness of Theorem \ref{thm: HB Improvement} (4)} 
An example with $d=2$, $n=3$ is given by the homogeneous form
\[h(x_1, x_2, x_3) = x_1x_2-x_3^2 \]
over $\F_q$ where $q \geq 3$. 
There are exactly $(q-1)q+q = q^2$ zeros of this equation. 
The lower bound given in Theorem \ref{thm: HB Improvement} (4) is 
$2q^{n-d} + (q-2)q = 2q + (q-2)q = q^2$. 
If $Z(h,\Fq^3)$ is an affine space, then $Z(h,\Fq^3)$ is a subspace since 
$\boldsymbol{0} \in Z(h,\Fq^3)$. 
This is not the case because $(1,0,0), (0,1,0) \in Z(h,\Fq^3)$, but, their sum, 
$(1,1,0) \notin Z(h,\Fq^3)$. 

A second example with $d=3$, $n=4$ is given by 
\[j(x_1, x_2, x_3, x_4) = x_1^3-x_2^2x_1+c(x_2,x_3,x_4),\] 
where $c(x_2,x_3,x_4) \in \F_3[x_2,x_3,x_4]$ is a cubic form having no nontrivial zero over $\F_3$, such as a norm form. Let $(a_1, a_2, a_3, a_4) \in \F_3^4$ be a zero of $j$. 
If $a_2 \neq 0$, then $a_1^3 - a_2^2a_1 = a_1^3 - a_1 = 0$, but 
$c(a_2,a_3,a_4) \neq 0$. Therefore, $j(a_1, a_2, a_3,a_4) \neq 0$. Thus, $a_2=0$. 
Let $a_3, a_4 \in \F_3$ be arbitrary. Then there exists a unique $a_1 \in \F_3$ such that 
$a_1^3 = -c(a_2,a_3,a_4)$. Therefore, $N(j,\F_3^4) = 9 = q^2 = 2q^{n-d} + (q-2)q$, 
which is the lower bound given in Theorem \ref{thm: HB Improvement} (4). 
If $Z(j, \F_3^4)$ is an affine space, then $Z(j, \F_3^4)$ is a subspace because 
$\boldsymbol{0} \in Z(j, \F_3^4)$. 
By the pigeonhole principle, there exist two distinct nonzero pairs 
$(a_3,a_4), (a_3',a_4') \in \F_3^2$ such that $a_1=a_1'$. 
Then $(a_1, 0, a_3, a_4) - (a_1',0, a_3', a_4') = (0,0,a_3-a_3', a_4-a_4')$. 
This is not a zero of $j$ because $c$ has no nontrivial zero over $\F_3$.

A third example with $d=2$ and $n=4$ is given by 
\[h(x_1, x_2, x_3, x_4) = x_1 x_2 + x_3^2 + x_4^2 \in \F_3[x_1,x_2,x_3,x_4].\]
Then $h$ has exactly $(q-1)q^2 + q = q^3 - q^2 + q = 21$ zeros over $\F_3$.
The lower bound given in Theorem \ref{thm: HB Improvement} (4) is 
$2q^{2} + (q-2)q = 3q^2 - 2q = 21$.
Since $|Z(h, \F_q^4)| = 21$ is not a power of $3$, it follows that 
$Z(h,\Fq^4)$ is not an affine space.
%, then $Z(h,\Fq^4)$ is a subspace since 
%$\boldsymbol{0} \in Z(h,\Fq^4)$. 
%This is not the case because $(1,0,0,0), (0,1,0,0) \in Z(h,\Fq^4)$, but, their sum, 
%$(1,1,0,0) \notin Z(h,\Fq^4)$. 

\section{Comparing (3) and (4) of Theorem \ref{thm: HB Improvement}}
\label{Comparing (3) and (4)}

We now compare the statements in Theorem 1.4 for the case of homogeneous forms.

We already observed at the beginning of the proof of Case III that when $q=2$ the bound in Theorem \ref{thm: HB Improvement} (3) is strictly less than the bound in Theorem \ref{thm: HB Improvement} (1).

Now assume that $q \ge 3$.  If $n - d = 1$, then
\[2q^{n-d} + (q-2)q = q^2 > \frac{q^2+q}{3} = 
\frac{q^{n+1-d}-1}{q-1} \cdot \dfrac{q}{n+2-d}.\]
In this case, Theorem \ref{thm: HB Improvement} (4) gives a larger lower bound than Theorem \ref{thm: HB Improvement} (3).

Next suppose $n - d \ge 2$. For real numbers $A$ and $B$, let
\def\stacktype{L} 
$A \  \mathbin{\stackanchor[11pt]{\stackanchor[7pt]{<}{=}}{>}} \  B$, denote the three statements $A<B$, $A=B$, $A>B$.

%\begin{lemma}\label{lem:stacked ineq}
%Let $q \geq 3$, $n-d \geq 2$, and $v=n+1-d$. Then $$ q \  \mathbin{\stackanchor[11pt]{\stackanchor[7pt]{<}{=}}{>}} \ \Big(2 + \dfrac{q^2-2q}{q^{n-d}}\Big)v+1+\dfrac{q^2-2q}{q^{n-d}} - \Big(\dfrac{1}{q}+\dfrac{1}{q^2}+ \dots + \dfrac{1}{q^{n-d-1}}\Big)$$ if and only if
%$$\dfrac{q^{n+1-d}-1}{q-1}\cdot\dfrac{q}{n+2-d} \  \mathbin{\stackanchor[11pt]{\stackanchor[7pt]{<}{=}}{>}} \ 2q^{n-d}+(q-2)q.$$
%\end{lemma}

\begin{proposition}\label{prop: comparison}
Let $q \geq 3$, $n-d \geq 2$, and $v=n+1-d$.
\begin{enumerate}
    \item If $q > \Big(2 + \dfrac{q^2-2q}{q^{n-d}}\Big)v+1+\dfrac{q^2-2q}{q^{n-d}} - \Big(\dfrac{1}{q}+\dfrac{1}{q^2}+ \dots + \dfrac{1}{q^{n-d-1}}\Big)$, then Theorem \ref{thm: HB Improvement} (3) gives a larger lower bound than Theorem \ref{thm: HB Improvement} (4).
    \item If $q < \Big(2 + \dfrac{q^2-2q}{q^{n-d}}\Big)v+1+\dfrac{q^2-2q}{q^{n-d}} - \Big(\dfrac{1}{q}+\dfrac{1}{q^2}+ \dots + \dfrac{1}{q^{n-d-1}}\Big)$, then Theorem \ref{thm: HB Improvement} (4) gives a larger lower bound than Theorem \ref{thm: HB Improvement} (3).
\end{enumerate}
\end{proposition}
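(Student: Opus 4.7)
The plan is a direct algebraic comparison. Set $v = n+1-d$; then $n-d = v-1$, and the geometric-series identity rewrites the bound in Theorem \ref{thm: HB Improvement} (3) as
\[
\frac{q^v - 1}{q-1} \cdot \frac{q}{v+1} \;=\; \frac{q + q^2 + \cdots + q^v}{v+1},
\]
while the bound in Theorem \ref{thm: HB Improvement} (4) is $2q^{v-1} + q^2 - 2q$. Clearing the denominator $v+1$ (which is positive), the comparison of the two bounds reduces to comparing $q + q^2 + \cdots + q^v$ with $(v+1)\bigl(2q^{v-1} + q^2 - 2q\bigr)$.

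Next I would normalize by dividing both sides by $q^{v-1} = q^{n-d}$. The left-hand side then becomes the mixed expression $q + 1 + \frac{1}{q} + \cdots + \frac{1}{q^{v-2}}$, and the right-hand side becomes $(v+1)\bigl(2 + \tfrac{q^2 - 2q}{q^{n-d}}\bigr)$. So Bound (3) strictly exceeds Bound (4) if and only if
\[
q + 1 + \frac{1}{q} + \cdots + \frac{1}{q^{n-d-1}} \;>\; \biggl(2 + \frac{q^2 - 2q}{q^{n-d}}\biggr)(v+1).
\]
(Here $n-d \ge 2$ ensures the sum on the left actually contains the displayed reciprocal terms.)

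To finish, I would expand $(v+1)\bigl(2 + \tfrac{q^2-2q}{q^{n-d}}\bigr) = \bigl(2 + \tfrac{q^2-2q}{q^{n-d}}\bigr)v + 2 + \tfrac{q^2-2q}{q^{n-d}}$, isolate $q$ on the left by moving every other term to the right, and combine the constant $2$ with the $-1$ coming from subtracting the ``$+1$'' of the harmonic-type sum. This yields exactly the inequality stated in part (1). Reversing the strict inequality at every step yields part (2). Because each manipulation is a reversible equivalence (the positivity of $q-1$, $v+1$, and $q^{n-d}$ preserves inequality directions), there is no genuine obstacle; the only care required is bookkeeping of which terms depend on $v$ and which on $n-d = v-1$.
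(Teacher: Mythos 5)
Your proposal is correct and follows essentially the same route as the paper: both rewrite the bound in Theorem \ref{thm: HB Improvement} (3) as a geometric sum, normalize by $q^{n-d}$, and reduce the comparison to the displayed inequality in $q$ and $v$ via a chain of reversible equivalences. The only cosmetic difference is that you run the chain from the bound comparison to the hypothesis on $q$, whereas the paper runs it in the opposite direction.
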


\begin{proof}
We have 
\begin{align*}
\dfrac{q^{n+1-d}-1}{q-1}\cdot\dfrac{q}{n+2-d} = \dfrac{q^{n+1-d}+q^{n-d}+\dots+q}{v+1} = \dfrac{q^{n-d}}{v+1}\Big(q+1+\dfrac{1}{q}+\dots+\dfrac{1}{q^{n-d-1}}\Big).
\end{align*}
We now begin the proof.
\begin{align*}
    q \  &\mathbin{\stackanchor[11pt]{\stackanchor[7pt]{<}{=}}{>}} \  \Big(2 + \dfrac{q^2-2q}{q^{n-d}}\Big)v+1+\dfrac{q^2-2q}{q^{n-d}} - \Big(\dfrac{1}{q}+\dfrac{1}{q^2}+ \dots + \dfrac{1}{q^{n-d-1}}\Big)
\end{align*}
if and only if
\begin{align*}
    q + 1 + \Big(\dfrac{1}{q}+\dfrac{1}{q^2}+ \dots + \dfrac{1}{q^{n-d-1}}\Big) \  &\mathbin{\stackanchor[11pt]{\stackanchor[7pt]{<}{=}}{>}} \  \Big(2 + \dfrac{q^2-2q}{q^{n-d}}\Big)(v+1) 
\end{align*}
if and only if
\begin{align*}
    \dfrac{q^{n-d}}{v+1} \Big(q + 1 + \dfrac{1}{q}+ \dots + \dfrac{1}{q^{n-d-1}}\Big) \  &\mathbin{\stackanchor[11pt]{\stackanchor[7pt]{<}{=}}{>}} \  q^{n-d}\Big(2 + \dfrac{q^2-2q}{q^{n-d}}\Big) = 2q^{n-d}+q^2-2q
\end{align*}
if and only if, using the first equation,
\begin{align*}
    \dfrac{q^{n+1-d}-1}{q-1}\cdot\dfrac{q}{n+2-d} \  &\mathbin{\stackanchor[11pt]{\stackanchor[7pt]{<}{=}}{>}} \ 2q^{n-d}+(q-2)q.
\end{align*}
\end{proof}

%\begin{proof}
%The result follows immediately from Lemma \ref{lem:stacked ineq} and the statement ofTheorem \ref{thm: HB Improvement}.
%\end{proof}

\begin{corollary}\label{cor to comparison} Let $q \geq 3$, $n-d \geq 2$, and $v=n+1-d$.
\begin{enumerate}
    \item[(1)] If $q \geq 3v+2$, then the hypothesis in Proposition \ref{prop: comparison} (1) holds.
    \item[(2)] If $q \leq 2v$, then the hypothesis in Proposition \ref{prop: comparison} (2) holds.
\end{enumerate}
\end{corollary}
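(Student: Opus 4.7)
The plan is to bound the quantity on the right-hand side of the inequalities in Proposition \ref{prop: comparison} by simpler expressions in $q$ and $v$, and then to check that the two hypotheses on $q$ deliver the required strict inequalities by elementary algebra. Writing $k = n - d \ge 2$, let
\[
R = \left(2 + \frac{q^2-2q}{q^k}\right)v + 1 + \frac{q^2-2q}{q^k} - \left(\frac{1}{q} + \frac{1}{q^2} + \cdots + \frac{1}{q^{k-1}}\right).
\]
Corollary \ref{cor to comparison} (1) asks me to show that $q \ge 3v+2$ forces $q > R$, and (2) asks me to show that $q \le 2v$ forces $q < R$.

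For (1), I would first use $k \ge 2$ to estimate $(q^2-2q)/q^k \le (q^2-2q)/q^2 = 1 - 2/q$, and drop the (nonnegative) geometric sum being subtracted, obtaining the clean upper bound
\[
R \le \left(3 - \frac{2}{q}\right)v + 2 - \frac{2}{q}.
\]
Rearranging $q > (3 - 2/q)v + 2 - 2/q$ as $q > 3v + 2 - (2v+2)/q$, the hypothesis $q \ge 3v + 2$ immediately produces the desired strict inequality, because $(2v+2)/q > 0$.

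For (2), I would lower-bound $R$ by using $(q^2-2q)/q^k \ge 0$ (valid since $q \ge 3$) together with the geometric series estimate $\sum_{i=1}^{k-1} 1/q^i < 1/(q-1)$, yielding
\[
R > 2v + 1 - \frac{1}{q-1}.
\]
Since $q \ge 3$ gives $1/(q-1) \le 1/2 < 1$, the hypothesis $q \le 2v$ implies $q + 1/(q-1) \le 2v + 1/(q-1) < 2v + 1$, which rearranges to $q < 2v + 1 - 1/(q-1) < R$, as needed.

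I expect no serious obstacle: both parts reduce to straightforward algebraic estimation once crude bounds on $(q^2-2q)/q^k$ and on the geometric tail are chosen. The only care needed is to ensure that every inequality is applied in the correct direction and that the standing assumptions $q \ge 3$ and $k \ge 2$ are invoked at the right steps; in particular, $k \ge 2$ is essential for the upper bound in (1), and $q \ge 3$ is essential for controlling the geometric tail in (2).
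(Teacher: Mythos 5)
Your proof is correct and follows essentially the same route as the paper: both arguments bound $(q^2-2q)/q^{n-d}$ using $n-d\ge 2$ and control the geometric tail crudely, the only difference being that your intermediate estimates (e.g.\ $R \le (3-2/q)v + 2 - 2/q$ rather than $R < 3v+2$) are marginally sharper than needed.
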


\begin{proof}
(1) Since $n-d \ge 2$ and $q \ge 3$, we have $0 < \frac{q^2-2q}{q^{n-d}} < 1$, 
and so 
\[2 < \Big(2 + \dfrac{q^2-2q}{q^{n-d}}\Big) < 3.\]
This gives
\[3v + 2 > \Big(2 + \dfrac{q^2-2q}{q^{n-d}}\Big)v+1+\dfrac{q^2-2q}{q^{n-d}} - \Big(\dfrac{1}{q}+\dfrac{1}{q^2}+ \dots + \dfrac{1}{q^{n-d-1}}\Big).\]

\medskip
(2) 
We have $\Big(\dfrac{1}{q}+\dfrac{1}{q^2}+ \dots + \dfrac{1}{q^{n-d-1}}\Big) 
< \frac{1}{q-1} \le 1$. Thus 
\[1 - \Big(\dfrac{1}{q}+\dfrac{1}{q^2}+ \dots + \dfrac{1}{q^{n-d-1}}\Big) > 0.\]
This gives 
\[2v < \Big(2 + \dfrac{q^2-2q}{q^{n-d}}\Big)v+1+\dfrac{q^2-2q}{q^{n-d}} - \Big(\dfrac{1}{q}+\dfrac{1}{q^2}+ \dots + \dfrac{1}{q^{n-d-1}}\Big).\]
 
\end{proof}

\bibliographystyle{plain}
\bibliography{references}{}

\begin{thebibliography}{1}

\bibitem{Ax}
J.~Ax.
\newblock Zeroes of polynomials over finite fields.
\newblock {\em Amer. J. Math.}, 86:255--261, 1964.

\bibitem{CW}
C.~Chevalley.
\newblock D\'emonstration d'une hypoth\`ese de {M}. {A}rtin.
\newblock {\em Abh. Math. Sem. Univ. Hamburg}, 11(1):73--75, 1935.

\bibitem{Heath-Brown}
D.~R. Heath-Brown.
\newblock On {C}hevalley-{W}arning theorems.
\newblock {\em Uspekhi Mat. Nauk}, 66(2(398)):223--232, 2011.

\bibitem{Warning}
E.~Warning.
\newblock Bemerkung zur vorstehenden {A}rbeit von {H}errn {C}hevalley.
\newblock {\em Abh. Math. Sem. Univ. Hamburg}, 11(1):76--83, 1935.

\end{thebibliography}

\end{document}